\documentclass[a4paper,10pt,oneside]{amsart}

\linespread{1.20}

\usepackage{amsmath,amssymb,amsthm}
\usepackage{mathrsfs}
\usepackage{url,graphicx}
\usepackage{xcolor}
\definecolor{aliceblue}{rgb}{0.94, 0.97, 1.0}
\usepackage[all]{xy}
\usepackage{nicefrac}
\usepackage{array}
\usepackage{tabularx}
\usepackage{hyperref}
\hypersetup{
    colorlinks,
    linkcolor={red!50!black},
    citecolor={blue!50!black},
    urlcolor={blue!80!black}
}

\theoremstyle{plain}
\newtheorem{lemma}{Lemma}[section]
\newtheorem{proposition}[lemma]{Proposition}
\newtheorem{theorem}[lemma]{Theorem}
\newtheorem*{theorem-nonum}{Theorem}

\theoremstyle{remark}
\newtheorem{remark}[lemma]{Remark}

\theoremstyle{definition}
\newtheorem{definition}[lemma]{Definition}

\newcommand{\p}{\mathbb{P}}
\newcommand{\C}{\mathbb{C}}

\newcommand{\Z}{\mathbb{Z}}
\newcommand{\N}{\mathbb{N}}
\newcommand{\T}{\mathbb{T}}

\newcommand{\pgl}{\p\!\operatorname{GL}}
\newcommand{\ord}{\operatorname{ord}}
\newcommand{\bl}[2]{\operatorname{Bl_{\,{#1}}}\!\left(#2\right)}
\newcommand{\Gr}[2]{\operatorname{\mathbb{G}}\!\left(#1, #2\right)}
\newcommand{\chern}[2]{\operatorname{c}_{#1}\!\left(#2\right)}
\newcommand{\chernT}[2]{\operatorname{c}_{#1}^{\T}\!\left(#2\right)}

\renewcommand{\tilde}{\widetilde}
\newcommand{\tth}{\thinspace}
\newcommand{\mscr}[1]{\mathscr{#1}}
\newcommand{\mcal}[1]{\mathcal{#1}}


\title{Counting projections of rational curves}

\author[M. Gallet]{
Matteo Gallet$^{\ast, \circ}$}
\address[Matteo Gallet]{International School for Advanced Studies/Scuola 
Internazionale Superiore di Studi Avanzati (ISAS/SISSA),
Via Bonomea 265, 34136 Trieste, Italy}
\email{mgallet@sissa.it}

\author[J. Schicho]{
Josef Schicho$^{\ast}$}
\address[Josef Schicho]{Research Institute for Symbolic Computation (RISC), 
Johannes Kepler University, Linz}
\email{jschicho@risc.jku.at}

\thanks{This version of the article has been accepted for publication, 
after peer review (when applicable) and is subject to Springer Nature's \href{https://www.springernature.com/gp/open-research/policies/accepted-manuscript-terms}{AM terms of use}, 
but is not the Version of Record and does not reflect post-acceptance improvements, or any corrections. 
The Version of Record is available online at: \url{https://doi.org/10.1007/s11856-020-2071-3}.}

\thanks{$^\ast$ Supported by the Austrian Science Fund (FWF): W1214-N15 
Project DK9 and P26607.} 

\thanks{$^\circ$ Supported by the Austrian Science Fund (FWF): P25652 and Erwin 
Schr\"odinger Fellowship J4253.}

\begin{document}

\begin{abstract}
 Given two general rational curves of the same degree in two projective spaces, 
one can ask whether there exists a third rational curve of the same degree that 
projects to both of them. We show that, under suitable assumptions on the degree 
of the curves and the dimensions of the two given ambient projective spaces, the 
number of curves and projections fulfilling the requirements is finite. Using 
standard techniques in intersection theory and the Bott residue formula, we 
compute this number.
\end{abstract}

\maketitle

\section*{Introduction}

Inspired by problems in multiview geometry concerning image-object 
correspondence under projections (see for example~\cite{Burdis2013}, 
and~\cite{Hartley2004} for a general account on the topic), we consider the 
following question, where all varieties are complex:
\begin{itemize}
 \item[\textbf{given}] two general rational curves $C_a 
\subseteq \p^a$ and $C_b  \subseteq \p^b$, both of degree $d \in \N$, 
and a natural number $c \in \N$,
 \item[\textbf{find}] a rational curve $C_c \subseteq  \p^c$ of 
degree~$d$, together with two linear projections
\[
 \pi_a \colon \p^c \dashrightarrow \p^a, \quad \pi_b \colon \p^c 
\dashrightarrow \p^b
\]
such that $\pi_a(C_c) = C_a$ and $\pi_b(C_c) = C_b$.
\end{itemize}
We are interested in counting the number of such curves and projections, when 
this number is finite up to projective equivalence in $\p^c$.

Because of the rationality assumption, we can interpret the curves appearing in 
the previous formulation as images of maps $f_u \colon \p^1 \longrightarrow 
\p^u$ for $u \in \{a,b,c\}$. In this way, we can translate the original 
problem into a problem of vector spaces of polynomials of degree~$d$ 
on~$\p^1$. Since we are only interested in the images of the maps~$f_u$, 
and not in the maps themselves, we need to allow possible reparametrizations, 
namely automorphisms of~$\p^1$. Once we apply this translation, the problem 
becomes:
\begin{itemize}
 \item[\textbf{given}] two general vector subspaces $V_a, V_b \subseteq 
\C[s,t]_d$ of dimension $a+1$ and $b+1$, respectively, and a natural number $c 
\in \N$,
 \item[\textbf{find}] automorphisms $\sigma \in \pgl(2,\C)$ such that
\[
 \dim \bigl( V_a + V_b^{\sigma} \bigr) \, \leq \, c+1.
\]
\end{itemize}
Here $\C[s,t]_d$ is the vector space of homogeneous polynomials of degree~$d$, 
and we denote by~$V_b^{\sigma}$ the image of~$V_b$ under the action 
of~$\sigma$, which operates on polynomials by applying the change of 
coordinates determined by~$\sigma$ to the variables.

A dimension count shows that one may expect that if $V_a$ and $V_b$ are general 
subspaces, and the condition
\begin{equation}
\label{equation:finiteness_condition}
\tag{$\ast$}
 (a + b - c + 1)(d - c) \, = \, 3
\end{equation}
holds, then there exists a finite number of automorphisms $\sigma \in 
\pgl(2,\C)$ satisfying the requirements of the problem. In terms of the initial 
formulation, this means that if Equation~\eqref{equation:finiteness_condition} 
holds and the curves $C_a$ and $C_b$ are general, then one may expect that 
there exist finitely many --- up to changes of coordinates in~$\p^c$ --- 
curves $C_c$ and projections~$\pi_a$ and~$\pi_b$ sending~$C_c$ to~$C_a$ 
and to~$C_b$, respectively.

\medskip
The aim of this paper is, under the assumption that 
Equation~\eqref{equation:finiteness_condition} 
holds and that the vector subspaces~$V_a$ and~$V_b$ are general,
\begin{itemize}
 \item[-] 
  to prove that the number of automorphisms $\sigma \in \pgl(2,\C)$ satisfying 
the requirements of the problem is indeed finite;
 \item[-]
  to provide a formula for this number in terms of the parameters $a,b,c,d$. 
\end{itemize}

This is the main result of our work (see 
Theorem~\ref{theorem:number_witnesses}): 
\begin{theorem-nonum}
  Let $C_a \subseteq \p^a$ and $C_b \subseteq \p^b$ be 
two general rational curves of degree~$d$. Let $c$ be a natural number and 
suppose that Equation~\eqref{equation:finiteness_condition} holds. Then there 
are, up to automorphisms of~$\p^c$, finitely many rational curves $C_c 
\subseteq \p^c$ of degree~$d$ together with linear projections $\pi_a 
\colon C_c \longrightarrow C_a$ and $\pi_b \colon C_c \longrightarrow C_b$.
\begin{itemize}
  \item[(1)]
    Suppose that $a + b + 1 - c = 1$ and $d - c = 3$. Then, the number of these 
curves and projections is
    \[
      \frac{1}{6} (a+3)(a+2)(a+1)(b+3)(b+2)(b+1).
    \]
  \item[(2)]
    Suppose that $a + b + 1 - c = 3$ and $d - c = 1$. Then, the number of these 
curves and projections is
    \[ 
      \frac{1}{6} \tth ab (a^2 - 1)(b^2 - 1).
    \]
\end{itemize}
\end{theorem-nonum}

The paper is structured as follows. In Section~\ref{linearization} we operate 
the translation from the first to the second formulation of the problem. In 
Section~\ref{finiteness} we prove that the number of solutions to our problem 
is finite when Equation~\eqref{equation:finiteness_condition} holds. 
Eventually, in Section~\ref{formula} we prove the formulas counting the number 
of solutions by means of intersection theory and the Bott residue formula.

\section{From curves to linear systems on~\texorpdfstring{$\p^1$}{the 
projective line}}
\label{linearization}

As explained in the Introduction, we are given two general rational curves $C_a 
\subseteq \p^a$ and $C_b \subseteq \p^b$, both of 
degree~$d$, and we ask whether there exists a non-degenerate rational curve 
$C_c \subseteq \p^c$, together with linear projections to both~$C_a$ 
and~$C_b$. 
The rationality of the curves allows us to use their parametrizations in order 
to attack this problem. This comes at a cost: since we are only interested in 
the curves, we need to take into account the possibility of reparametrizations. 

If for $u \in \{a,b,c\}$ the morphism $f_{u} \colon \p^1 \longrightarrow 
\p^{u}$ is a parametrization of the curve~$C_u$, then our problem translates to:

\begin{itemize}
 \item[\textbf{given}]
  two general morphisms $f_a \colon \p^1 \longrightarrow \p^a$ and $f_b
\colon \p^1 \longrightarrow \p^b$ of degree $d$, and a number $c \in \N$,
 \item[\textbf{find}]
  a morphism $f_c  \colon \p^1 \longrightarrow \p^c$, 
together with two linear projections $\pi_a$, $\pi_b$ and two isomorphisms 
$\sigma_a, \sigma_b \colon \p^1 \longrightarrow \p^1$ making the following 
diagram commutative:
  \begin{equation}
  \begin{gathered}
  \label{equation:diagram}
   \xymatrix{
     & \p^1 \ar[rr]^{f_a} & & \p^a \\
     \p^1 \ar[rr]^{f_c} \ar[ru]^{\sigma_a} \ar[rd]_{\sigma_b} & & 
      \p^c \ar@{-->}[ru]_{\pi_a} \ar@{-->}[rd]^{\pi_b} \\
     & \p^1 \ar[rr]^{f_b} & & \p^b
   }
  \end{gathered}
  \end{equation}
\end{itemize}

We notice that, by eventually re-defining the map~$f_c$, we can always suppose 
that in Diagram~\eqref{equation:diagram} the map~$\sigma_a$ is the identity, 
so we ask whether there exists an automorphism $\sigma \in \pgl(2, \C)$ such 
that the following diagram commutes.
\begin{equation}
\label{equation:diagram_simplified}
\begin{gathered}
 \xymatrix{
  & & \p^a \\
  \p^1 \ar[r]^{f_c} \ar@/^1pc/[rru]^{f_a} \ar@/_1pc/[rrd]_{f_b \circ \sigma} & 
  \p^c \ar@{-->}[ru]_{\pi_a} \ar@{-->}[rd]^{\pi_b} \\
  & & \p^b
 }
\end{gathered}
\end{equation}
Let us write
\[
 f_a = (F_0 : \dotsc : F_a), \quad 
 f_b = (G_0 : \dotsc : G_b), \quad 
 f_c = (H_0 : \dotsc : H_c),
\]
where $\{F_i\}_{i=0}^{a}$, $\{G_j\}_{j=0}^{b}$ and $\{H_k\}_{k=0}^{c}$ are 
homogeneous polynomials of degree~$d$ in~$\C[s,t]$. If we denote by 
$G_j^{\sigma}$ the image of~$G_j$ under the action of~$\sigma$, then the 
commutativity of Diagram~\eqref{equation:diagram_simplified} is equivalent to
\begin{equation}
\label{equation:explicit_dependence}
\begin{gathered}
 (F_0 : \dotsc : F_a) = 
 \left( 
  \sum \lambda_{0j} H_j : \dotsc : \sum \lambda_{aj} H_j 
 \right) \\
 (G_0^{\sigma} : \dotsc : G_b^{\sigma}) =  
 \left( 
  \sum \mu_{0j} H_j : \dotsc : \sum \mu_{bj} H_j 
 \right)
\end{gathered}
\end{equation}
for some complex coefficients $\{ \lambda_{ij} \}_{i \in \{0, 
\dotsc, a\}}^{j \in \{0, \dotsc, c\}}$ and $\{ \mu_{ij} \}_{i \in \{0, 
\dotsc, b\}}^{j \in \{0, \dotsc, c\}}$. If we define
\[
 V_a := \left\langle F_0, \dotsc, F_a \right\rangle
 \quad \text{and} \quad
 V_b := \left\langle G_0, \dotsc, G_b \right\rangle, 
\]
then, taking into account Equation~\eqref{equation:explicit_dependence}, one 
sees that the existence of maps~$f_c$, projections~$\pi_a$ and~$\pi_b$ 
and automorphisms~$\sigma$ as in Diagram~\eqref{equation:diagram_simplified} is 
equivalent to the existence of automorphisms~$\sigma$ such that 
\[
 \dim \bigl( V_a + V_b^{\sigma} \bigr) \, \leq \, c+1,
\]
where recall that we denote by~$V_b^{\sigma}$ the image of~$V_b$ under the 
action of~$\sigma$, which operates by a change of coordinates. 

This completes the translation of our initial problem into a question 
concerning vector subspaces of the space of binary homogeneous polynomials of 
degree~$d$.
Therefore from now on we will be concerned with the problem (already reported 
in the Introduction):
\begin{itemize}
 \item[\textbf{given}] two vector subspaces $V_a, V_b \subseteq \C[s,t]_d$ of 
dimension~$a+1$ and~$b+1$, respectively, and a natural number $c \in \N$,
 \item[\textbf{find}] automorphisms $\sigma \in \pgl(2,\C)$ such that
\[
 \dim \bigl( V_a + V_b^{\sigma} \bigr) \, \leq \, c+1.
\]
\end{itemize}

We conclude this section by mentioning a duality between instances and 
solutions of the previous problem that will be useful in the next 
sections. Consider the following perfect pairing between spaces of binary 
polynomials of degree~$d$:
\begin{equation}
\label{equation:pairing}
  \begin{array}{rccc}
    ( \, \cdot, \cdot \, ) & \C[s,t]_{d} \times \C[s,t]_{d} & 
\longrightarrow & \C \\
    & (p_0 s^d + \dotsb + p_d t^d, \tth q_0 s^d + \dotsb + q_d t^d) & \mapsto & 
\displaystyle \sum_{i=0}^{d} p_i \tth q_{d-i} \frac{(-1)^i}{\binom{d}{i}}
  \end{array}
\end{equation}
This is the unique (up to scaling) pairing that is invariant under the 
action of the group~$\pgl(2,\C)$ by change of coordinates. It turns out to be 
the $d$-th symmetric power of the pairing on the set of linear forms given by 
the determinant (see \cite[Section 1.5.1]{Dolgachev2012}). Using this pairing, 
we can define the orthogonal space to every vector subspace $V \subseteq 
\C[s,t]_d$ by setting
\[
 V^{\perp} := \bigl\{ F \in \C[s,t]_d \, : \, (F,G) = 0 \text{ for all } 
G \in V \bigr\}.
\]
Every automorphism $\sigma \in \pgl(2,\C)$ admits an 
adjoint~$\sigma^{\perp}$ with respect to this pairing, such that $(F^{\sigma}, 
G) = (F, G^{\sigma^{\perp}})$. With these definitions, one can check that there 
is a bijection, given by $\sigma \leftrightarrow \sigma^{\perp}$, between the 
sets:
\begin{equation}
\label{equation:bijection}
 \resizebox{.93\textwidth}{!}{$
 \left\{
 \begin{array}{c}
  \text{automorphisms } \sigma \text{ such that} \\
  \dim \bigl( V_a + V_b^{\sigma} \bigr) \, \leq \, c+1
 \end{array}
 \right\}
 \, \longleftrightarrow \,
 \left\{
 \begin{array}{c}
  \text{automorphisms } \sigma^{\perp} \text{ such that} \\
  \dim \bigl( V_a^{\perp} + (V_b^{\perp})^{\sigma^{\perp}} \bigr) \, 
  \leq \, d-c-1
 \end{array}
 \right\}
 $}
\end{equation}

\section{Finiteness of solutions}
\label{finiteness}

In this section we prove that if $V_a$ and $V_b$ are 
general and Equation~\eqref{equation:finiteness_condition} 
holds, namely
\[
 (a + b - c + 1)(d - c) \, = \, 3,
\]
then the number of automorphisms $\sigma \in 
\pgl(2,\C)$ such that $\dim ( V_a + V_b^{\sigma} ) \leq c+1$ is finite.
To understand why we expect this result, we can argue as follows. 
Since $V_a$ has dimension~$a+1$, then its 
orthogonal~$V_a^{\perp}$ has dimension $d-a$. If we fix a basis $L_1, \dotsc, 
L_{d-a}$ of~$V_a^{\perp}$ and a basis $G_0, \dotsc, G_b$ of $V_b$, then the 
condition $\dim \bigl( V_a + V_b^{\sigma} \bigr) \leq c+1$ is equivalent to 
imposing that the rank of the matrix with entries $(L_i, G_j^{\sigma})$ 
is~$c-a$, where $( \, \cdot, \cdot \, )$ is the pairing defined in 
Section~\ref{linearization}. The matrix is of size $(d-a) \times (b+1)$, and so 
one can expect that this rank condition is a condition on~$\pgl(2,\C)$ of 
codimension (see \cite[Proposition~12.2]{Harris1995})
\[
 \bigl((d-a)-(c-a)\bigr) \bigl((b+1)-(c-a)\bigr) = (d-c)(a+b-c+1).
\]
Since $\pgl(2,\C)$ is three-dimensional, if we want a finite number of solution 
we should suppose that the previous quantity equals~$3$.

Define the incidence variety
\begin{multline*}
 \mscr{I} \, = \, 
 \Bigl\{ 
  \bigl( \sigma, [V_a], [V_b] \bigr) \in \pgl(2,\C) 
  \times \Gr{a}{d} \times \Gr{b}{d} \, : \\[-6pt]
  \dim ( V_a + V_b^{\sigma} ) \leq c+1 
 \Bigr\},
\end{multline*}
where $\Gr{b}{n}$ is the Grassmannian of $b$-dimensional linear 
subvarieties of~$\p^n$, and $[\,\cdot\,]$ denotes the element in the 
Grassmannian determined by a vector subspace of~$\C[s,t]_d$. If we consider the 
projection
\[
 \psi \colon \mscr{I} \longrightarrow \pgl(2,\C) \times \Gr{b}{d}
\]
on the first and the third component, then we see that the fibers of~$\psi$ are 
isomorphic to Schubert varieties of~$\Gr{a}{d}$. In fact, following 
\cite[Section~4.1]{Eisenbud2016}, if we fix $\sigma \in \pgl(2,\C)$ and $[V_b] 
\in \Gr{b}{d}$, we can define a complete flag
\[
 \mathbb{F}: \quad 
 \{0\} \, \subsetneq \, \mathbb{F}_1 \, \subsetneq \, 
 \mathbb{F}_2 \, \subsetneq \, \dotsb \, \subsetneq \, 
 \mathbb{F}_{d+1} = \C[s,t]_d
\]
for~$\C[s,t]_d$ such that $\mathbb{F}_{b+1} = V_b^{\sigma}$. The fiber of~$\psi$ 
over $(\sigma, [V_b])$ can be written as
\[
 \psi^{-1}(\sigma, [V_b]) = 
 \bigl\{ 
  \bigl( \sigma, [V_a], [V_b] \bigr) \, : \, 
  \dim ( V_a \cap V_b^{\sigma} ) \geq a+b-c+1 
 \bigr\}.
\]
One can check that the latter is isomorphic to the Schubert variety
\[
 \Sigma_{\lambda}(\mathbb{F}) = 
 \bigl\{ 
  \Lambda \in \Gr{a}{d} \, : \, 
  \dim ( \mathbb{F}_{d-a+i-\lambda_i} \cap \Lambda) \geq i \;\; 
  \text{for all } i \in \{1, \dotsc, a+1\} 
 \bigr\},
\]
where
\[
 \lambda = (\underbrace{d-c, \dotsc, d-c}_{(a+b-c+1) \text{ times}}, 
 \underbrace{0, \dotsc, 0}_{(c-b) \text{ times}}).
\]
The fibers of~$\psi$ are hence irreducible; moreover, their codimension in 
$\Gr{a}{d}$ is
\begin{equation}
\label{equation:codimension}
 (d-c)(a+b-c+1).
\end{equation}
This implies that also~$\mscr{I}$ is irreducible, and a direct computation 
shows that if Equation~\eqref{equation:finiteness_condition} holds, then
\[
 \dim(\mscr{I}) \, = \, \dim \bigl( \Gr{a}{d} \times \Gr{b}{d} \bigr).
\]
In order to prove our initial claim, it is enough to show that the projection
\[
 \phi \colon \mscr{I} \longrightarrow \Gr{a}{d} \times \Gr{b}{d}
\]
is dominant. By the properties of the dimension of the fiber of a regular map 
(see \cite[Section~6.3, Theorem~1.25]{Shafarevich2013}), 
it follows that in order to show that $\phi$ is dominant, it suffices to 
exhibit a single point in~$\Gr{a}{d} \times \Gr{b}{d}$ whose preimage 
under~$\phi$ is zero-dimensional.

Notice that Equation~\eqref{equation:finiteness_condition} holds if and 
only if 
\begin{center}
\begin{minipage}{0.4\textwidth}
 \begin{equation}
 \label{equation:finiteness_condition_caseA}
 \tag{$\ast.a$}
  \left\{
   \begin{array}{rcl}
    d - c & = & 3 \\
    a + b - c + 1 & = & 1
   \end{array}
  \right.
 \end{equation} 
\end{minipage}%
\begin{minipage}{0.1\textwidth}
 \begin{center}
 \text{or}
 \end{center}
\end{minipage}
\begin{minipage}{0.4\textwidth}
 \begin{equation}
 \label{equation:finiteness_condition_caseB}
 \tag{$\ast.b$} 
  \left\{
   \begin{array}{rcl}
    d - c & = & 1 \\
    a + b - c + 1 & = & 3
   \end{array}
  \right.
 \end{equation}
\end{minipage}
\end{center}
From now on we suppose that we are in the case prescribed by 
Equation~\eqref{equation:finiteness_condition_caseA}, and at the end of 
the section we explain how to handle the situation determined by 
Equation~\eqref{equation:finiteness_condition_caseB}.

We construct a point in~$\Gr{a}{d} \times \Gr{b}{d}$ as follows. Define
\[
 V_a := (g_a)_d \qquad V_b := (g_b)_d,
\]
where $(\cdot)_d$ denotes the $d$-th homogeneous component of a homogeneous 
ideal, and the polynomials~$g_a$ and~$g_b$ are any two homogeneous 
polynomials such that:
\begin{itemize}
 \item[-]
  $\deg(g_a) = d-a$ and $\deg(g_b) = d-b$;
 \item[-]
  both $g_a$ and $g_b$ are squarefree;
 \item[-]
  the cross-ratios of any four roots of~$g_a$ or $g_b$ are different.
\end{itemize}
By construction, we have that $\dim(V_a) = a+1$ and $\dim(V_b) = b+1$, so 
$([V_a],[V_b])$ is a point in~$\Gr{a}{d} \times \Gr{b}{d}$. Moreover, by the 
hypothesis on the cross-ratios, we see that for any fixed $\sigma \in 
\pgl(2,\C)$, the polynomials~$g_a$ and~$g_b^{\sigma}$ can have at most three 
roots in common. We show now that with this choice of~$V_a$ and~$V_b$ there are 
finitely many $\sigma \in \pgl(2,\C)$ such that $\dim ( V_a + V_b^{\sigma} ) 
\leq c+1$. Notice that, taking into account 
Equation~\eqref{equation:finiteness_condition_caseA}, the latter condition is 
equivalent to
\[
 \dim ( V_a \cap V_b^{\sigma} ) \, \geq \, a + b - c + 1 \, = \, 1.
\]
Moreover, the set $V_a \cap V_b^{\sigma}$ is constituted of the 
multiples of degree $d$ of the least common multiple of~$g_a$ 
and~$g_b^{\sigma}$. Since by 
Equation~\eqref{equation:finiteness_condition_caseA} we have $(d-a) + (d-b) = 
d+3$, it follows
\[
 \deg \Bigl( \operatorname{lcm}\bigl(g_a, g_b^{\sigma}\bigr)\Bigr) = d + 3 - 
 \deg \Bigl( \operatorname{gcd}\bigl(g_a, g_b^{\sigma}\bigr)\Bigr).
\]
Taking into account that by the cross-ratio hypothesis we have 
\[
 \deg \Bigl( \operatorname{gcd}\bigl(g_a, g_b^{\sigma}\bigr)\Bigr) \, 
 \leq \, 3,
\]
it follows that the only elements $\sigma \in \pgl(2,\C)$ for which the 
condition $\dim ( V_a \cap V_b^{\sigma} ) \geq 1$ is satisfied are the ones such 
that $g_a$ and $g_b^{\sigma}$ have exactly three roots in common. By the 
cross-ratio hypothesis, and by the fact that every automorphism of~$\p^1$ is 
completely determined by the images of three projectively independent points, it 
follows that there are only finitely many such~$\sigma$. Moreover, we can also 
count the number of these automorphisms~$\sigma$: each of them is specified by a 
pair constituted of a triple of roots of~$g_a$ and a triple of roots of~$g_b$, 
so in total they are (remember that all roots of~$g_a$ and~$g_b$ are distinct 
because of the squarefreeness hypothesis)
\[
 6 \, \binom{d-a}{3} \binom{d-b}{3} \, = \, 6 \, \binom{a+3}{3} \binom{b+3}{3}.
\]
In the next section we prove that this is also the number when we 
take~$V_a$ and~$V_b$ to be general. This proves our initial claim when
Equation~\eqref{equation:finiteness_condition_caseA} holds.

In order to obtain an example of a point in~$\Gr{a}{d} \times \Gr{b}{d}$ whose 
preimage under~$\phi$ is finite when 
Equation~\eqref{equation:finiteness_condition_caseB} holds, we employ the 
perfect pairing~\eqref{equation:pairing} introduced in 
Section~\ref{linearization}. The orthogonals~$V_a^{\perp}$ and~$V_b^{\perp}$ of 
the spaces~$V_a$ and~$V_b$ with respect to this pairing have 
dimension~$d-a$ and~$d-b$, respectively. By unraveling the definitions of the 
vector subspaces involved, and using the bijection~\eqref{equation:bijection}, 
it 
follows that there is a bijection between the preimages of the points $([V_a], 
[V_b])$ and $([V_a^{\perp}], [V_b^{\perp}])$ --- which belong to different 
Grassmannians and so appear in different instances of our problem. Moreover, if 
with $([V_a], [V_b])$ we are in the situation prescribed by 
Equation~\eqref{equation:finiteness_condition_caseA}, then with $([V_a^{\perp}], 
[V_b^{\perp}])$ we are in the situation prescribed by 
Equation~\eqref{equation:finiteness_condition_caseB}. This shows that we do not 
need to provide another example to ensure that also when
Equation~\eqref{equation:finiteness_condition_caseB} holds the map~$\phi$ is 
dominant, and so our initial claim is proven.

\section{A formula for the number of solutions}
\label{formula}

In this section we compute the number of solutions of our problem when the two 
vector subspaces $V_a$ and $V_b$ are general. From the previous section we know 
that this number is finite when Equation~\eqref{equation:finiteness_condition} 
 holds. 

We associate to~$V_b$ a rational map $\Phi_{V_b} \colon 
\p^3 \dashrightarrow \Gr{b}{d}$ and to~$V_a$ a Schubert variety~$Z_{V_a}$ 
in~$\Gr{b}{d}$ so that the number we are looking for 
is the degree of~$\Phi_{V_b}^{-1}\bigl(Z_{V_a}\bigr)$. The difficulty we 
encounter here in computing such degree is that $\Phi_{V_b}$ is not defined on 
the whole~$\p^3$, hence we lift it to a morphism $\tilde{\Phi}_{V_b} \colon 
\tilde{\p^3} \longrightarrow \Gr{b}{d}$, where $\tilde{\p^3}$ is a 
blow up of~$\p^3$. By doing this and using Porteous-Giambelli formula, 
everything is reduced to the computation of the Chern classes of a vector 
bundle on~$\tilde{\p^3}$. In particular, we will be interested in the degree of 
some polynomial combination of these Chern classes, and we will calculate these 
degrees via Bott residue formula.

We start with the construction of~$\Phi_{V_b}$. Define $U$ to be the complement 
of the quadric $\{ \alpha \delta - \beta 
\gamma = 0 \}$ in~$\p^3$, where we take projective coordinates $(\alpha: 
\beta: \gamma: \delta)$. We identify a matrix $A = \left( \begin{smallmatrix} 
\alpha & \beta \\ \gamma & \delta \end{smallmatrix} \right)$ with the point 
$(\alpha: \beta: \gamma: \delta)$, so that $U$ is in bijection with $\pgl(2, 
\C)$. The morphism $\Phi_{V_b} \colon U \longrightarrow \Gr{b}{d}$ is defined 
as follows: to every point $A \in U$, we set $\Phi_{V_b}(A)$ 
to be the point in $\Gr{b}{d}$ associated to the vector subspace~$V_b^\sigma$, 
where $\sigma \in \pgl(2,\C)$ is the automorphism corresponding to~$A$. It is 
known (see~\cite[Proposition~2.8]{Arrondo1996}) 
that every morphism to a Grassmannian is completely determined by a 
locally free sheaf, together with a choice of a vector subspace of its global 
sections.
In our case, the locally free sheaf~$\mscr{M}$ on~$U$ determining~$\Phi_{V_b}$ 
is generated by the columns of the matrix~$\mcal{M}$ with entries in $R = 
\C[\alpha, \beta, \gamma, \delta]$ obtained in the following way. Let $(G_0, 
\dotsc, G_b)$ be a basis for~$V_b$, then the $i$-th row of~$\mcal{M}$ is given 
by extracting the coefficients of $G_i$ (with respect to the monomial basis 
$s^d, \dotsc, t^d$ of~$\C[s,t]_d$) after having applied to it the change of 
variables 
\[
 \begin{pmatrix} s \\ t \end{pmatrix} \mapsto 
 \begin{pmatrix} \alpha & \beta \\ \gamma & \delta \end{pmatrix} 
 \begin{pmatrix} s \\ t \end{pmatrix}.
\]
If we denote by~$M$ the graded submodule of~$R(d)^{b+1}$ generated by the 
columns of~$\mcal{M}$, then $\mscr{M} = \widetilde{M}$. Here $R(b)$ denotes the 
graded $R$-module obtained by shifting by~$b$ the standard 
$\Z$-grading on~$R$. The locally free sheaf~$\mscr{M}$ is then the restriction 
to~$U$ of a coherent sheaf on~$\p^3$, which we still denote by~$\mscr{M}$. 

Notice that the group $\pgl(2,\C)$ acts both from the left and from the right 
on~$\p^3$ by left and right matrix multiplication, when we identify points 
in~$\p^3$ with equivalence classes of $2 \times 2$ matrices. Both these actions 
induce naturally actions on~$R(d)^{b+1}$. The right 
action will play a crucial in the following, in particular in the proof of 
Proposition~\ref{proposition:bad_lines} and in 
Propositions~\ref{proposition:description_bundle} 
and~\ref{proposition:degrees}.

\begin{lemma}
\label{lemma:right_action}
The right action of~$\pgl(2, \C)$ on~$R(d)^{b+1}$ preserves the submodule~$M$.
\end{lemma}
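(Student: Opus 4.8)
The plan is to compute the right action explicitly on a set of generators of~$M$ and show they are permuted only up to constant-coefficient combinations. Write a generic element of~$\pgl(2,\C)$, acting on the right, as a constant invertible matrix $P = \left(\begin{smallmatrix} p_{11} & p_{12} \\ p_{21} & p_{22}\end{smallmatrix}\right)$. Since the right action sends the matrix of coordinate functions $A = \left(\begin{smallmatrix}\alpha & \beta \\ \gamma & \delta\end{smallmatrix}\right)$ to~$AP$, the induced ring automorphism of~$R$ is the linear substitution replacing~$A$ by~$AP$, and the induced action on~$R(d)^{b+1}$ is the diagonal one: it applies this substitution to each of the $b+1$ coordinates while leaving the free generators untouched. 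Thus it suffices to show that the substitution carries each of the $d+1$ columns of~$\mcal{M}$ into the $\C$-linear span of those columns.

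The heart of the matter is to recognize what the substitution $A \mapsto AP$ does to the binary forms encoding the rows of~$\mcal{M}$. Recall that the $i$-th row consists of the coefficients of $G_i^A(s,t) := G_i(\alpha s + \beta t,\, \gamma s + \delta t) = \sum_{j=0}^{d} \mcal{M}_{ij}\, s^{d-j} t^{j}$. Since the right action affects only $\alpha,\beta,\gamma,\delta$ and not $s,t$, I would pull back the two linear forms $\alpha s + \beta t$ and $\gamma s + \delta t$ and verify that the substitution turns them into $\alpha s' + \beta t'$ and $\gamma s' + \delta t'$, where $\binom{s'}{t'} = P\binom{s}{t}$. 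In other words, acting by~$P$ on the right is the same as reparametrizing the source variables $(s,t)$ by~$P$, and it sends $G_i^A(s,t)$ to $G_i^A(s',t')$. This is the one step that must be carried out with care, since it is easy to land on~$P^{-1}$ or~$P^{\top}$ instead of~$P$; the safeguard is to compute the pullback of each of $\alpha,\beta,\gamma,\delta$ directly from the entries of~$AP$.

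Granting this, the conclusion is formal. Expanding $(s')^{d-j}(t')^{j} = (p_{11}s + p_{12}t)^{d-j}(p_{21}s + p_{22}t)^{j}$ in the monomial basis expresses $G_i^A(s',t')$ as $\sum_{k} \bigl(\sum_{j} C_{jk}\, \mcal{M}_{ij}\bigr)\, s^{d-k}t^{k}$, where the constants~$C_{jk}$ depend only on~$P$ and assemble into an invertible matrix~$C$ (the $d$-th symmetric power of~$P$, up to transpose, in the monomial basis). Comparing coefficients shows that the substitution sends~$\mcal{M}$ to $\mcal{M}\,C$, so each column of~$\mcal{M}$ goes to a constant-coefficient combination of the columns, hence into~$M$. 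Because the action is a ring automorphism of~$R$ acting diagonally, an arbitrary element $\sum_k r_k v_k \in M$, with the~$v_k$ the columns, is sent into~$M$ as well; applying the same argument to~$P^{-1}$ gives the reverse inclusion, so the right action preserves~$M$. The grading is respected throughout, since the entries of~$\mcal{M}$ are homogeneous of degree~$d$ and~$C$ has constant entries.
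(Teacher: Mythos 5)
Your proof is correct and follows essentially the same route as the paper's: the paper writes $\mcal{M} = \mcal{K}\cdot\operatorname{Sym}_d\left(\begin{smallmatrix}\alpha & \beta \\ \gamma & \delta\end{smallmatrix}\right)$ and invokes functoriality of $\operatorname{Sym}_d$ to conclude that the right action multiplies $\mcal{M}$ on the right by the constant invertible matrix $\operatorname{Sym}_d(\sigma)$, which is exactly your matrix~$C$ obtained by expanding the reparametrized monomials $(s')^{d-j}(t')^{j}$. Your explicit verification that $A \mapsto AP$ amounts to the source reparametrization $(s,t)\mapsto P(s,t)$, and your closing remarks on arbitrary module elements and the inverse $P^{-1}$, just make explicit what the paper leaves implicit.
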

\begin{proof}
The right action by an element of~$\pgl(2,\C)$ sends each of the generators 
of~$M$ to a linear combination (with complex coefficients) of the same 
generators. This can be immediately seen if we write the matrix $\mcal{M}$ as 
the product $\mcal{K} \cdot 
\operatorname{Sym}_d \left( \begin{smallmatrix} \alpha & \beta \\ \gamma & 
\delta \end{smallmatrix} \right)$, where $\mcal{K}$ is a $(b+1) \times 
(d+1)$ matrix with complex coefficients, while $\operatorname{Sym}_d(\cdot)$ 
denotes the $d$-th symmetric algebra functor. In this way, if $\sigma \in 
\pgl(2, \C)$ is any element, then the generators of the module $M \cdot 
\sigma$ are the columns of the matrix $\mcal{K} \cdot 
\operatorname{Sym}_d \left( \begin{smallmatrix} \alpha & \beta \\ \gamma & 
\delta \end{smallmatrix} \right) \cdot\operatorname{Sym}_d (\sigma)$, which are 
complex linear combinations of the columns of~$\mcal{M}$.
\end{proof}

\begin{remark}
\label{remark:preserved_lines}
 Consider the two rulings of the quadric $\{\alpha \delta - \beta \gamma = 0\}$ 
in~$\p^3$. A direct computation shows that the right action of~$\pgl(2,\C)$ 
preserves one of the two rulings as a whole, permuting the lines in this 
ruling, while it preserves each individual line of the other.
\end{remark}

A nice property of the coherent sheaf~$\mscr{M}$ is that it is locally free on 
an open subset larger than~$U$ and whose complement in~$\p^3$ is constituted by 
disjoint lines. We prove this in 
Proposition~\ref{proposition:bad_lines}. To this end, we introduce some 
technical tools.

Starting from~$V_b$, we define a zero-dimensional subscheme in~$\p^1$ that will 
be used to describe the locus in~$\p^3$ where $\mscr{M}$ is not locally free.

\begin{definition}
\label{definition:bad_points}
 Let $V_b \subseteq \C[s,t]_d$ be a vector subspace of dimension~$b+1$. Define
 \[
  \widehat{B} \, := \, 
  \bigl\{ 
   (s:t) \in \p^1 \, : \, \text{there exists } 
   G \in V_b \text{ such that } \operatorname{ord}_{(s:t)} G \geq b+1 
  \bigr\}.
\]
If $G_0, \dotsc, G_b$ is a basis for~$V_b$, then we have
\begin{equation}
\label{equation:B_scheme}
  \widehat{B} \, = \, \left\{ (s:t) \in \p^1 \, : \, \left| 
  \begin{array}{*3{>{\displaystyle}c}} \displaystyle
    \frac{\partial^{b}G_0}{\partial s^b}(s,t) & \cdots & 
    \frac{\partial^{b}G_0}{\partial t^b}(s,t) \\
    \vdots & \ddots & \vdots \\
    \frac{\partial^{b}G_b}{\partial s^b}(s,t) & \cdots & 
    \frac{\partial^{b}G_b}{\partial t^{b}}(s,t)
  \end{array}
  \right| = 0 \right\}.
\end{equation}
Notice that the last description of~$\widehat{B}$ allows to endow it with the 
structure of a scheme. Using Euler's identity for homogeneous 
polynomials, and column reduction, one sees that the determinant 
in Equation~\eqref{equation:B_scheme}, multiplied by a suitable power 
of~$s$, is a scalar multiple of
\begin{equation}
\label{equation:wronskian}
 \left| 
  \begin{array}{*4{>{\displaystyle}c}} 
    G_0(s,t) & \frac{\partial G_0}{\partial t}(s,t)  & \cdots & 
\frac{\partial^{b}G_0}{\partial t^b}(s,t) \\
    & \vdots &  & \vdots \\
    G_b(s,t) & \frac{\partial G_0}{\partial t}(s,t) & \cdots & 
\frac{\partial^{b}G_b}{\partial t^{b}}(s,t)
  \end{array}
  \right|.
\end{equation}
If the determinant in Equation~\eqref{equation:wronskian} is identically zero, 
then in particular it is so when $s=1$. In this case, however, in 
Equation~\eqref{equation:wronskian} we get the Wronskian of the polynomials 
$G_0, \dotsc, G_b$, and the Wronskian of linearly independent polynomials 
cannot 
be identically zero (see~\cite{Bostan2010}). Hence it cannot happen 
that $\widehat{B} = \p^1$. Since the determinant 
in Equation~\eqref{equation:B_scheme} has degree~$(b+1)(d-b)$, then 
$\widehat{B}$ is a zero-dimensional scheme of length~$(b+1)(d-b)$.
\end{definition}

\begin{lemma}
\label{lemma:general_bad_points}
 Let $V_b \subseteq \C[s,t]_d$ be a general vector subspace of dimension~$b+1$. 
Then the set~$\widehat{B}$ as in Definition~\ref{definition:bad_points} is 
constituted of $(b+1)(d-b)$ distinct points.
\end{lemma}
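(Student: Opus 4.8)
The plan is to realize the non-reduced locus as the image of an incidence variety of controlled dimension, using the Schubert-variety description already exploited in Section~\ref{finiteness}. By the computation in Definition~\ref{definition:bad_points}, the scheme $\widehat{B}$ always has length $(b+1)(d-b)$; hence it consists of $(b+1)(d-b)$ distinct points if and only if it is reduced, that is, if and only if the Wronskian determinant~\eqref{equation:wronskian} has no repeated root. For a point $(s:t) \in \p^1$ and a subspace~$V_b$, I would record the orders of vanishing at~$(s:t)$ attained by the nonzero elements of~$V_b$ as a vanishing sequence $0 \le a_0 < a_1 < \dotsb < a_b \le d$, and set the ramification weight $w(V_b,(s:t)) := \sum_{i=0}^{b}(a_i - i)$. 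It is classical that the multiplicity of the Wronskian at~$(s:t)$ equals this weight, and that $(s:t) \in \widehat{B}$ precisely when $w \ge 1$ (i.e. some $G \in V_b$ has $\ord_{(s:t)} G \ge b+1$). Thus $\widehat{B}$ is reduced if and only if no point of~$\p^1$ has weight $\ge 2$ for~$V_b$.

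First I would fix a point $p = (s:t)$ and describe the locus of subspaces with $w(V_b, p) \ge 2$. Writing $\mathbb{F}^p$ for the osculating flag at~$p$, whose $j$-th step consists of the forms vanishing to order at least $d+1-j$ at~$p$, the condition $w(V_b,p) \ge 2$ is a union of Schubert conditions on~$V_b$ relative to~$\mathbb{F}^p$, indexed by the partitions $(2)$ and $(1,1)$. By the standard codimension formula for Schubert varieties (see \cite[Section~4.1]{Eisenbud2016}), each such Schubert variety has codimension~$2$ in $\Gr{b}{d}$, so the locus $\bigl\{ V_b : w(V_b,p) \ge 2 \bigr\}$ has codimension at least~$2$.

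Next I would assemble these fibers into the incidence variety
\[
 \mathcal{B} \, = \, \bigl\{ ([V_b], p) \in \Gr{b}{d} \times \p^1 \, : \, w(V_b, p) \ge 2 \bigr\},
\]
and project it to the two factors. The projection to~$\p^1$ has fibers of codimension at least~$2$ by the previous step, so
\[
 \dim \mathcal{B} \, \le \, \bigl( \dim \Gr{b}{d} - 2 \bigr) + \dim \p^1 \, = \, \dim \Gr{b}{d} - 1.
\]
Projecting instead to~$\Gr{b}{d}$, the image --- which is exactly the set of subspaces for which $\widehat{B}$ fails to be reduced --- is a constructible set of dimension at most $\dim \Gr{b}{d} - 1$, hence contained in a proper closed subset. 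Since $\Gr{b}{d}$ is irreducible, a general $V_b$ lies in the complement, and for such~$V_b$ the scheme $\widehat{B}$ is reduced of length $(b+1)(d-b)$, as claimed.

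The main point requiring care is the interface between the three equivalent descriptions of a point of~$\widehat{B}$: the order-of-vanishing condition in Definition~\ref{definition:bad_points}, the multiplicity of the Wronskian~\eqref{equation:wronskian}, and the Schubert condition with respect to the osculating flag~$\mathbb{F}^p$. I would verify that the multiplicity of the Wronskian at~$p$ equals the ramification weight $w(V_b,p)$ --- the classical local computation identifying inflectionary behaviour with the jumps of the vanishing sequence --- so that \emph{reduced} really does translate into \emph{no weight-$\ge 2$ point}. Once this dictionary is in place, the dimension count above is routine and no explicit example is needed.
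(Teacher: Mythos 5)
Your proof is correct, but it takes a genuinely different route from the paper's. The paper considers the map $\Xi \colon \Gr{b}{d} \longrightarrow \operatorname{Hilb}_{(b+1)(d-b)}(\p^1)$, $[V_b] \mapsto [\widehat{B}]$, between irreducible varieties of the same dimension, and proves it is dominant by exhibiting a single zero-dimensional fiber: the fiber over the fat point $Z = V\bigl(t^{(b+1)(d-b)}\bigr)$, which Lemma~\ref{lemma:no_cancellation} forces to consist of the single monomial subspace $\bigl\langle s^b t^{d-b}, \dotsc, t^d \bigr\rangle$; the conclusion then follows because reduced schemes form a dense open subset of the Hilbert scheme. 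You instead run the standard ``a general linear series on $\p^1$ has simple ramification'' argument: you stratify by ramification weight, observe that for fixed $p$ the locus $\bigl\{[V_b] : w(V_b,p)\ge 2\bigr\}$ is the union of the two codimension-$2$ Schubert varieties (partitions $(2)$ and $(1,1)$) relative to the osculating flag at~$p$, and conclude by projecting an incidence variety of dimension at most $\dim\Gr{b}{d}-1$ down to the Grassmannian. Both proofs ultimately rest on the same local fact --- the order of the Wronskian~\eqref{equation:wronskian} at a point equals the ramification weight $\sum_i (a_i - i)$ --- which the paper isolates as Lemma~\ref{lemma:no_cancellation}; note that the paper proves it only at $(1:0)$, so your use of it at an arbitrary point needs the $\pgl(2,\C)$-action, under which the scheme structure of~$\widehat{B}$ from Definition~\ref{definition:bad_points} is covariant. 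Your argument buys economy: no Hilbert scheme input, no explicit degenerate example, with only the Schubert codimension formula as outside machinery. The paper's argument buys a slightly stronger conclusion: $\Xi$ is dominant, so the \emph{general} length-$(b+1)(d-b)$ subscheme of~$\p^1$ (in particular the general configuration of distinct points) actually arises as~$\widehat{B}$ for some~$V_b$, which is more information than the mere genericity of reducedness.
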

\begin{proof}
 Denote by~$\operatorname{Hilb}_{(b+1)(d-b)}(\p^1)$ the Hilbert scheme of 
zero-dimensional schemes of length~$(b+1)(d-b)$ in~$\p^1$. Since it is the 
projectivization of the space of bivariate polynomials of 
degree~$(b+1)(d-b)$, the scheme~$\operatorname{Hilb}_{(b+1)(d-b)}(\p^1)$ 
is irreducible of dimension~$(b+1)(d-b)$. Consider now the morphism:
\[
 \begin{array}{rccc}
  \Xi \colon & \Gr{b}{d} & \longrightarrow & 
  \operatorname{Hilb}_{(b+1)(d-b)}(\p^1) \\
  & [V_b] & \mapsto & [\widehat{B}]
 \end{array}
\]
where $\widehat{B}$ is as in Definition~\ref{definition:bad_points}. By 
construction, the map $\Xi$ is a morphism between varieties of the same 
dimension. If we show that $\Xi$ is dominant, then the statement is proven, 
since the locus of schemes constituted of $(b+1)(d-b)$ distinct points is open 
in~$\operatorname{Hilb}_{(b+1)(d-b)}(\p^1)$. 

We prove that $\Xi$ is dominant as in Section~\ref{finiteness}, namely by 
showing that $\Xi^{-1}([Z])$ is zero-dimensional for a particular point $[Z] \in 
\operatorname{Hilb}_{(b+1)(d-b)}(\p^1)$. We pick $Z$ to be the subscheme 
in~$\p^1$ defined by the ideal~$\bigl( t^{(b+1)(d-b)} \bigr)$, where we take 
$(s:t)$ as homogeneous coordinates in~$\p^1$. A direct computation shows that if 
we take $\overline{V}_b := \left\langle s^b t^{d-b}, \dotsc, t^d \right\rangle$, 
then $\Xi([\overline{V}_b]) = [Z]$. Hence $[Z]$ is in the image of~$\Xi$. Now 
suppose that $\Xi([V_b]) = [Z]$ for some vector subspace $V_b = \left\langle 
G_0, \dotsc, G_b \right\rangle$. We are going to show that $\ord_{(1:0)} G_i 
\geq d-b$ for all $i \in \{0, \dotsc, b\}$. If this is true, then $V_b$ is 
contained in the $d$-th homogeneous component~$(t^{d-b})_d$ of the 
ideal~$(t^{d-b})$. Since both~$V_b$ and~$(t^{d-b})_d$ have dimension~$b+1$, they 
are equal. This proves that, at least set-theoretically, the fiber 
$\Xi^{-1}([Z])$ is constituted of a single point, so in particular it is 
zero-dimensional. This concludes the proof. 

To show that the order of the polynomials~$\{ G_i \}_{i=0}^{b}$ is at 
least~$d-b$ at~$(1:0)$, let $\alpha_i := \ord_{(1:0)} G_i$ for all~$i$.
Notice that we can suppose that all the orders of the~$G_i$ are different, and 
we can order them so that
\[
 \alpha_0 \lneq \alpha_1 \lneq \dotsb \lneq \alpha_b.
\]
Because of Lemma~\ref{lemma:no_cancellation} below, we know that $\sum 
(\alpha_i - i) = (b+1)(d-b)$. This forces $\alpha_i \geq d-b$ for all~$i$: in 
fact, we have $\alpha_b \leq d$, so $\alpha_b - b \leq d-b$, hence $\alpha_i - 
i \leq d-b$ for all~$i$, and so actually we must have $\alpha_i - i = d-b$ for 
all~$i$.
\end{proof}

We thank Christoph Koutschan for providing us the proof of the following lemma.

\begin{lemma}
\label{lemma:no_cancellation}
 Let $G_0, \dotsc, G_b$ be homogeneous polynomials in~$\C[s,t]_d$, and let 
$\alpha_i := \ord_{(1:0)} G_i$. Suppose that $\alpha_0 \lneq \alpha_1 \lneq 
\dotsb \lneq \alpha_b$ holds. Then the order at~$(1:0)$ of the determinant of 
the matrix from Equation~\eqref{equation:wronskian} equals $\sum (\alpha_i - 
i)$.
\end{lemma}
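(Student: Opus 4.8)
The plan is to prove Lemma~\ref{lemma:no_cancellation} by analyzing the order of vanishing at $(1:0)$ of the Wronskian-type determinant in Equation~\eqref{equation:wronskian}, i.e.\ of $W = \det\bigl( \partial^{\,j} G_i / \partial t^{\,j} \bigr)_{0 \leq i,j \leq b}$. The key observation is that the order of each column is governed by the orders of the individual polynomials: differentiating $G_i$ once with respect to~$t$ drops the order at $(1:0)$ by exactly one, provided the leading term does not vanish. So I would first set the affine coordinate $s = 1$ and write each $G_i$ as a power series (in fact a polynomial) in $t$ of the form $G_i = c_i t^{\alpha_i} + (\text{higher order})$ with $c_i \neq 0$, using the assumption that the orders satisfy $\alpha_0 \lneq \dotsb \lneq \alpha_b$.

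The main step is to show that in the expansion of the determinant, the contributions of lowest order in~$t$ do not cancel. The naive lower bound on $\ord_{(1:0)} W$ is obtained by the diagonal term: the $(i,j)$ entry $\partial^{\,j} G_i/\partial t^{\,j}$ has order $\alpha_i - j$ (or is higher order / zero if $\alpha_i < j$), so the product along a permutation $\tau$ has order $\sum_i (\alpha_i - \tau(i)) = \sum_i \alpha_i - \binom{b+1}{2}$, which is the same for every permutation. Thus every term in the determinant expansion has the \emph{same} minimal possible order $\sum_i \alpha_i - \binom{b+1}{2} = \sum_i (\alpha_i - i)$, and the real content is to show that the coefficient of this lowest-order monomial $t^{\sum(\alpha_i-i)}$ is nonzero, so that no cancellation occurs. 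The plan is to extract this leading coefficient: the coefficient of $t^{\alpha_i - j}$ in $\partial^{\,j} G_i / \partial t^{\,j}$ is $c_i \cdot \alpha_i (\alpha_i - 1) \cdots (\alpha_i - j + 1) = c_i \cdot (\alpha_i)_{\underline{j}}$, a falling factorial. Factoring out $c_i$ from row~$i$, the leading coefficient of~$W$ becomes $\bigl( \prod_i c_i \bigr) \cdot \det\bigl( (\alpha_i)_{\underline{j}} \bigr)_{0 \leq i,j \leq b}$.

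The crux is then to show that this determinant of falling factorials is nonzero. I expect this to be the main obstacle, though it is a clean combinatorial fact. Since the falling factorials $(x)_{\underline{0}}, (x)_{\underline{1}}, \dotsc, (x)_{\underline{b}}$ form a basis for the polynomials of degree $\leq b$ obtainable by an upper-triangular unipotent change of basis from $1, x, \dotsc, x^b$, the matrix $\bigl( (\alpha_i)_{\underline{j}} \bigr)_{i,j}$ differs from the Vandermonde matrix $\bigl( \alpha_i^{\,j} \bigr)_{i,j}$ by multiplication on the right by a unipotent upper-triangular matrix, hence has the \emph{same} determinant. That Vandermonde determinant is $\prod_{i < i'} (\alpha_{i'} - \alpha_i)$, which is nonzero precisely because the $\alpha_i$ are pairwise distinct (indeed strictly increasing) by hypothesis. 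Therefore the leading coefficient $\bigl( \prod_i c_i \bigr) \prod_{i<i'}(\alpha_{i'}-\alpha_i)$ is nonzero, the lowest-order term of~$W$ survives, and $\ord_{(1:0)} W = \sum_i \alpha_i - \binom{b+1}{2} = \sum_i (\alpha_i - i)$, as claimed.
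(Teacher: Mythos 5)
Your proposal is correct and follows essentially the same route as the paper's proof: set $s=1$, observe that every Leibniz term attains the same minimal possible order $\sum(\alpha_i - i)$, and reduce the no-cancellation claim to the non-vanishing of the falling-factorial determinant $\det\bigl(\alpha_i(\alpha_i-1)\cdots(\alpha_i-j+1)\bigr)$, which equals the Vandermonde determinant $\prod_{i<i'}(\alpha_{i'}-\alpha_i)\neq 0$ by column reduction. Your explicit justification that each permutation contributes the same order $\sum_i\alpha_i - \binom{b+1}{2}$ is a slightly more careful phrasing of what the paper calls ``direct inspection,'' but the argument is the same.
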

\begin{proof}
By a direct inspection of the orders of the entries of the matrix in 
Equation~\eqref{equation:wronskian}, we see that the order of the determinant 
is at least $\sum (\alpha_i - i)$, and it is exactly equal to 
this number if no cancellation occurs when we compute the determinant using the 
standard Leibniz formula. One sees that it is harmless to set $s 
= 1$, and that we can suppose that the coefficient of the monomial 
$t^{\alpha_i}$ in $G_i(1,t)$ is~$1$. Everything reduces to show that 
the matrix obtained by taking the trailing coefficients of the entries (namely, 
the coefficients of the monomials where the lowest power of~$t$ appears) is 
non-singular. This matrix is the following:
\begin{equation}
\label{equation:vandermonde}
 \left| 
  \begin{array}{*5{>{\displaystyle}c}} 
    1 & \alpha_0  & \alpha_0(\alpha_0 - 1) & \cdots & 
    \alpha_0(\alpha_0 - 1) \cdots (\alpha_0 - b + 1) \\
    \vdots & \vdots & \vdots & & \vdots \\
    1 & \alpha_b & \alpha_b(\alpha_b - 1) & \cdots & 
    \alpha_b(\alpha_b - 1) \cdots (\alpha_b - b + 1)
  \end{array}
  \right|.
\end{equation}
By applying column reduction, the determinant in 
Equation~\eqref{equation:vandermonde} turns out to be equal to the Vandermonde 
determinant $\prod_{i < j} (\alpha_i - \alpha_j)$, which is not zero since all 
the $\{ \alpha_i \}_{i=0}^{b}$ are different.
\end{proof} 

\begin{lemma}
\label{lemma:general_orders}
 Let $V_b \subseteq \C[s,t]_d$ be a general vector subspace of 
dimension~\mbox{$b+1$}. 
Consider a point $(\bar{s}:\bar{t}) \in \p^1$ and let $G_0, \dotsc, G_b$ be a 
basis for~$V_b$ such that $\bigl( \ord_{(\bar{s}:\bar{t})}G_i \bigr)_i$ is a 
strictly increasing sequence. Let  $\widehat{B}$ be as in 
Definition~\ref{definition:bad_points}. Then
\begin{itemize}
 \item 
  if $(\bar{s}:\bar{t}) \not\in \widehat{B}$, we have $\ord_{(\bar{s}:\bar{t})} 
G_i = i$ for all~$i$;
 \item
  if $(\bar{s}:\bar{t}) \in \widehat{B}$, we have $\ord_{(\bar{s}:\bar{t})} G_i 
= i$ for $i \in \{0, \dotsc, b-1\}$ and $\ord_{(\bar{s}:\bar{t})}G_b = b+1$.
\end{itemize}
\end{lemma}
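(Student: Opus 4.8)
The plan is to derive everything from the order computation already supplied by Lemma~\ref{lemma:no_cancellation}, using as the only additional input that for a general $V_b$ the scheme $\widehat{B}$ is reduced. First I would fix a general $V_b$ for which, by Lemma~\ref{lemma:general_bad_points}, the scheme $\widehat{B}$ consists of $(b+1)(d-b)$ distinct points. Since the determinant $D$ of Equation~\eqref{equation:B_scheme} is the homogeneous polynomial of degree $(b+1)(d-b)$ cutting out $\widehat{B}$, in this generic situation it factors into $(b+1)(d-b)$ distinct linear forms. Consequently $\ord_{(\bar{s}:\bar{t})} D$ equals $1$ when $(\bar{s}:\bar{t}) \in \widehat{B}$ and equals $0$ otherwise.

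The core step is to identify $\ord_{(\bar{s}:\bar{t})} D$ with the weight $\sum_{i=0}^{b}(\alpha_i - i)$, where $\alpha_i = \ord_{(\bar{s}:\bar{t})} G_i$. Since by hypothesis the $\alpha_i$ form a strictly increasing sequence of nonnegative integers, we have $\alpha_i \geq i$, and moreover $\alpha_{i+1} - \alpha_i \geq 1$ forces the sequence $(\alpha_i - i)_i$ to be nonnegative and non-decreasing. To compute the order of $D$ at $(\bar{s}:\bar{t})$ I would choose $\tau \in \pgl(2,\C)$ with $\tau(1:0) = (\bar{s}:\bar{t})$ and pass to the basis $G_0^{\tau}, \dotsc, G_b^{\tau}$, whose orders at $(1:0)$ are again $\alpha_0 \lneq \dotsb \lneq \alpha_b$. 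Recalling from Definition~\ref{definition:bad_points} that $D$ is a nonzero scalar multiple of the Wronskian-type determinant of Equation~\eqref{equation:wronskian}, Lemma~\ref{lemma:no_cancellation} applied at $(1:0)$ to the $G_i^{\tau}$ then gives that this order is exactly $\sum_{i}(\alpha_i - i)$.

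The combinatorial conclusion is then immediate and yields both cases at once. If $(\bar{s}:\bar{t}) \notin \widehat{B}$ then $\sum_i (\alpha_i - i) = 0$, and since every summand is nonnegative we get $\alpha_i = i$ for all $i$. If instead $(\bar{s}:\bar{t}) \in \widehat{B}$ then $\sum_i (\alpha_i - i) = 1$, and since $(\alpha_i - i)_i$ is a non-decreasing sequence of nonnegative integers summing to $1$, it must be $(0, \dotsc, 0, 1)$; hence $\ord_{(\bar{s}:\bar{t})} G_i = i$ for $i \in \{0, \dotsc, b-1\}$ and $\ord_{(\bar{s}:\bar{t})} G_b = b+1$, as claimed.

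The hard part will be justifying that $\ord_{(\bar{s}:\bar{t})} D$ is unchanged when one transports the point to $(1:0)$ via $\tau$, that is, that the ramification determinant of $V_b^{\tau}$ vanishes at $(1:0)$ to the same order to which that of $V_b$ vanishes at $(\bar{s}:\bar{t})$. Here I would lean on the intrinsic nature of $\widehat{B}$: because it is defined by the basis-independent, geometric condition that there exist $G \in V_b$ with $\ord_{(\bar{s}:\bar{t})} G \geq b+1$, its scheme structure transforms equivariantly, so that $\widehat{B}(V_b^{\tau}) = \tau^{-1}\bigl(\widehat{B}(V_b)\bigr)$ as schemes on~$\p^1$ and the multiplicities at corresponding points coincide. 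This identifies the order of the transformed determinant at $(1:0)$ with $\ord_{(\bar{s}:\bar{t})} D$ and closes the argument; everything else is the bookkeeping already handled in Lemma~\ref{lemma:no_cancellation}.
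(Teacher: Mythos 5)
Your proposal is correct and follows essentially the same route as the paper: reduce to the order at $(\bar{s}:\bar{t})$ of the determinant cutting out $\widehat{B}$, which is $0$ or $1$ by Lemma~\ref{lemma:general_bad_points}, identify it with $\sum_i \bigl(\ord_{(\bar{s}:\bar{t})}G_i - i\bigr)$ via Lemma~\ref{lemma:no_cancellation}, and conclude by monotonicity of the sequence $\bigl(\ord_{(\bar{s}:\bar{t})}G_i - i\bigr)_i$. The only difference is that you make explicit the $\pgl(2,\C)$-equivariance of $\widehat{B}$ needed to transport Lemma~\ref{lemma:no_cancellation} from $(1:0)$ to an arbitrary point, a step the paper applies implicitly.
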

\begin{proof}
 Suppose that $(\bar{s}:\bar{t}) \not\in \widehat{B}$. Then $(\bar{s}:\bar{t})$ 
is not a zero of the determinant in Equation~\eqref{equation:wronskian}, so the 
order of this determinant at~$(\bar{s}:\bar{t})$ is zero. By 
Lemma~\ref{lemma:no_cancellation} it follows that $\ord_{(\bar{s}:\bar{t})} G_i 
= i$ for all~$i$. 

Suppose now that $(\bar{s}:\bar{t}) \in \widehat{B}$. Then by 
Lemma~\ref{lemma:general_bad_points} the determinant in 
Equation~\eqref{equation:wronskian} has order~$1$ at~$(\bar{s}:\bar{t})$. Again 
by Lemma~\ref{lemma:no_cancellation} it follows that exactly one of the numbers 
$\{ \ord_{(\bar{s}:\bar{t})} G_i - i\}_{i=0}^{b}$ equals~$1$, while all the 
others are zero. By hypothesis we have 
\[
 \ord_{(\bar{s}:\bar{t})} G_0 - 0 \leq 
 \ord_{(\bar{s}:\bar{t})} G_1 - 1 \leq 
 \dotsb \leq 
 \ord_{(\bar{s}:\bar{t})} G_b - b,
\]
and so the only possibility is the one presented in the statement.
\end{proof}

\begin{proposition}
\label{proposition:bad_lines}
For a general choice of a subspace~$V_b \subseteq \C[s,t]_d$ of 
dimension~\mbox{$b+1$}, the sheaf~$\mscr{M}$ is locally free on an open set $U' 
\supseteq U$ that is the complement of $(b + 1)(d - b)$ disjoint lines 
in~$\p^3$.
\end{proposition}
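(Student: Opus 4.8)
The plan is to first locate the non--locally--free locus $\Sigma$ of $\mscr{M}$ structurally, and only afterwards to identify it precisely and count its components. Since $\mscr{M}$ is the image of $\mcal{M}\colon\mcal{O}^{d+1}\to\mcal{O}(d)^{b+1}$, it is a subsheaf of a locally free sheaf, hence torsion free; on a smooth threefold a torsion--free sheaf is locally free away from a closed set of codimension at least two, so $\Sigma$ is at most a curve. It is contained in the quadric $Q=\p^3\setminus U$, as $\mscr{M}$ is already locally free on $U$. By Lemma~\ref{lemma:right_action} the right action of $\pgl(2,\C)$ preserves the module $M$, so it carries $\mscr{M}$ to itself and leaves $\Sigma$ invariant. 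Combining this with Remark~\ref{remark:preserved_lines} --- the right action fixes each line of one ruling of $Q$, moving its points transitively, and permutes the lines of the other ruling --- an invariant set of dimension at most one can only be a \emph{finite} union of lines of the first ruling; call them $L_{(x:y)}$, indexed by the point $(x:y)\in\p^1$ that stays constant along the line. Lying in one ruling of a smooth quadric, such lines are automatically pairwise disjoint.

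It then remains to decide which lines $L_{(x:y)}$ actually occur in $\Sigma$ and to count them. By the correspondence between morphisms to Grassmannians and locally free sheaves with sections (\cite[Proposition~2.8]{Arrondo1996}), $\mscr{M}$ is locally free at a point $A_0\in Q$ exactly when the rational map $\Phi_{V_b}\colon A\mapsto V_b^{\sigma}$ extends to a morphism at $A_0$. Because $\Sigma$ is a union of whole lines $L_{(x:y)}$ and the right action is transitive along each of them, it suffices to test extension at one convenient point of each line; I would take $A_0=\left(\begin{smallmatrix}x&0\\ y&0\end{smallmatrix}\right)$, whose limiting form is $s^d$. Writing $A=\left(\begin{smallmatrix}\alpha&\beta\\ \gamma&\delta\end{smallmatrix}\right)$ near $A_0$ and using homogeneity, the change of variables yields the exact finite expansion
\[
 G^{\sigma}(s,t)=\sum_{k=0}^{d}\frac{s^{d-k}t^{k}}{k!}\,\bigl(\beta\partial_1+\delta\partial_2\bigr)^{k}G(\alpha,\gamma),
\]
so the coefficient vector of each $G_i^{\sigma}$ is controlled by the jets of $G_i$ at $(\alpha:\gamma)$, and the limit of $V_b^{\sigma}$ as $A\to A_0$ is governed by the orders of vanishing of the elements of $V_b$ at $(x:y)$.

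Here Lemma~\ref{lemma:general_orders} supplies the decisive dichotomy. If $(x:y)\notin\widehat{B}$, a basis of $V_b$ realizes the orders $0,1,\dots,b$; scaling the $k$-th coordinate by the appropriate power of the approach parameter and normalizing each row by its leading power, I expect the limit subspace to be the fixed coordinate subspace spanned by $s^{d},s^{d-1}t,\dots,s^{d-b}t^{b}$, \emph{independently of the direction of approach}, so that $\Phi_{V_b}$ extends and $\mscr{M}$ is locally free along $L_{(x:y)}$. If instead $(x:y)\in\widehat{B}$, the realizable orders jump to $0,1,\dots,b-1,b+1$, and the same computation shows that when the first column is also allowed to move the top row of the limiting matrix acquires a component in the $s^{d-b}t^{b}$ coordinate whose size depends on the approach direction; hence different paths produce different limits, $\Phi_{V_b}$ is genuinely indeterminate, and $\mscr{M}$ fails to be locally free there. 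This identifies $\Sigma=\bigcup_{(x:y)\in\widehat{B}}L_{(x:y)}$.

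Finally, Lemma~\ref{lemma:general_bad_points} gives that for general $V_b$ the scheme $\widehat{B}$ consists of exactly $(b+1)(d-b)$ distinct points, so $\Sigma$ is a union of exactly $(b+1)(d-b)$ pairwise disjoint lines and $U':=\p^3\setminus\Sigma\supseteq U$ is the asserted open set on which $\mscr{M}$ is locally free. The hard part will be the local limit analysis of the third paragraph: one is taking a two--parameter degeneration --- the two columns of $A$ collapsing at possibly different rates --- of the family $V_b^{\sigma}$, and the whole argument turns on extracting from the order data of Lemma~\ref{lemma:general_orders} precisely when the limit in $\Gr{b}{d}$ is path--independent. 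Making the bookkeeping of leading exponents rigorous, and checking that the coordinate frame fixed at $A_0$ remains legitimate while the base point $(\alpha:\gamma)$ moves, is the delicate step; everything else is either structural or a direct consequence of the lemmas already in place.
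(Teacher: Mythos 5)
Your opening reduction is genuinely different from the paper's and is correct as far as it goes: the paper never argues abstractly about the shape of the non-locally-free locus, it simply \emph{defines} $B$ from $\widehat{B}$ and then proves local freeness on the complement by computation. Your combination of torsion-freeness of $\mscr{M}$ (hence failure of local freeness only in codimension $\geq 2$) with $\pgl(2,\C)$-equivariance of $M$ and Remark~\ref{remark:preserved_lines} does force the bad locus $\Sigma$ to be a finite union of lines of the preserved ruling, pairwise disjoint, and testing one point per line is then legitimate. (Note that the proposition only needs $\Sigma\subseteq B$; your converse claim, that the lines over $\widehat{B}$ are genuinely bad, is not needed and also not proved.)

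The genuine gap is exactly where you flag it, and it is not a routine bookkeeping matter: the claim that for $(x:y)\notin\widehat{B}$ the limit of $V_b^{\sigma}$ in $\Gr{b}{d}$ is independent of the direction of approach \emph{is} the content of the proposition, and your sketch of it would fail as stated. Normalizing each row by its leading power does not work, because the individual row limits are path-dependent: at $A_0=\left(\begin{smallmatrix}1&0\\0&0\end{smallmatrix}\right)$, in the paper's local coordinates the $i$-th row of $\mcal{M}$ (the coefficient vector of $G_i^{\sigma}$) is $\bigl(G_i(1,\gamma),\,D\,\partial_t G_i(1,\gamma),\,\tfrac{D^2}{2}\partial_t^2 G_i(1,\gamma),\dotsc\bigr)$ with $D=\delta-\beta\gamma$, so for $\ord_{(1:0)}G_1=1$ the second row is $\bigl(c\gamma+O(\gamma^2),\ c'D+O(D\gamma),\ O(D^2),\dotsc\bigr)$ with $c,c'\neq 0$, whose limiting direction depends on the ratio $\gamma:D$. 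Path-independence holds only for the row \emph{span}, and to see it one must eliminate the first column using row $0$, divide the remaining rows by $D$, and iterate; this Gaussian elimination over the local ring at $A_0$ is precisely the paper's proof (and is reused in Proposition~\ref{proposition:description_bundle}), and it is the step your proposal defers. Two smaller but real gaps sit in the same paragraph: the equivalence ``$\mscr{M}$ locally free at $A_0$ iff $\Phi_{V_b}$ extends at $A_0$'' does not follow from \cite[Proposition~2.8]{Arrondo1996} alone, which concerns morphisms --- the direction you need (extension implies local freeness) requires observing that any extending bundle is a quotient of $\mscr{M}$ with kernel supported on the quadric, hence zero by torsion-freeness; and identifying ``all arc limits agree'' with ``$\Phi_{V_b}$ extends'' needs a Zariski-main-theorem argument on the graph closure. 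Both are standard, but together with the missing elimination they mean the proposal currently contains the correct skeleton and the correct dichotomy, while the mathematical heart of the proof is absent.
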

\begin{proof}
We define the set
\[
  B \, := \, 
  \left\{ 
   \left(
   \begin{array}{cc}
     xu & xv \\ yu & yv 
   \end{array}
   \right) \in \p^3 \, : \, 
   (x:y) \in \widehat{B}, \ (u:v) \in \p^1 
  \right\},
\]
where $\widehat{B}$ is as in Definition~\ref{definition:bad_points}. 
Lemma~\ref{lemma:general_bad_points} implies that $B$ is a set of \mbox{$(b + 
1)$}\mbox{$(d - b)$} disjoint lines in~$\p^3$. In the future, we will use the 
fact that each of these lines is preserved by the right action of~$\pgl(2,\C)$ 
(see Remark~\ref{remark:preserved_lines}). We define the open set $U'$ to be 
the complement of~$B$ in~$\p^3$, so by construction we have $U \subseteq U'$.

We prove that $\mscr{M}$ is locally free at every point in $U'$. As far as the 
points in~$U$ are concerned, there is nothing to prove. Let $A \in U' 
\setminus U$, then we use the left and the right action of~$\pgl(2, \C)$ 
on~$\p^3$ and suppose that $A = \left( \begin{smallmatrix} 1 & 0 \\ 0 & 0 
\end{smallmatrix} \right)$. Hence, it follows that with this choice of 
coordinates the point $(1:0)$ does not belong to~$\widehat{B}$, because 
otherwise we would have $A \in B$. Notice that the left action does not 
preserve the module~$M$ but this is not a problem, since we only want to 
establish local freeness, and so we can also work with modules that are just 
isomorphic to~$M$. By Lemma~\ref{lemma:general_orders}, we can choose 
a basis $G_0, \dotsc, G_b$ of~$V$ with $\operatorname{ord}_{(1:0)} G_i = i$ 
for all~$i$. In this way, the matrix~$\mcal{M}$ whose columns 
generate~$\mscr{M}$ has the form 
\[ 
  \mcal{M}_{ij} \, = \, 
  \frac{\partial^{d} G_{i}(\alpha s + \beta t, \gamma s + \delta t)}{\partial 
s^{d-j} \partial t^{j}} \,.
\]
Since the question is local, we can restrict ourselves to the open chart 
of~$\p^3$ where $\alpha = 1$. This corresponds to consider $R' = 
\C[\beta, \gamma, \delta]$, the coordinate ring of the open chart of~$\p^3$ we 
are working on, and the restriction of~$\mscr{M}$ to such chart, whose 
corresponding module~$M'$ is generated by the columns of~$\mcal{M}$ where we 
make the substitution $\alpha=1$. Our goal is to prove that the 
first~$b+1$ columns of~$\mcal{M}$ generate freely~$M'$ over the 
ring~$R'_{\mscr{A}}$, where $\mscr{A}$ is the maximal ideal in~$R'$ of the 
point~$A$. To make the computations easier, we employ the substitution $s \to s 
- \beta t$: such a substitution operates on the matrix~$M'$ as the 
multiplication on the left by an invertible $(d+1) \times (d+1)$ matrix with 
entries in $R'$; hence the modules spanned by the columns of these two 
matrices are isomorphic. Thus, for our purposes we can suppose 
that the matrix~$\mcal{M}$ has the form $\mcal{M}_{ij} = 
\frac{\partial^{d} G_{i}(s, \gamma s  - \gamma \beta t + 
\delta t)}{\partial s^{d-j} \partial t^{j}}$. Eventually, we can perform the
change of variables $D = \delta - \beta \gamma$, obtaining 
\[
  \mcal{M}_{ij} \, = \,
  \frac{\partial^{d} G_{i}(s, \gamma s  + Dt)}{\partial s^{d-j} \partial t^{j}}.
\]
To write explicitly the entries $\mcal{M}_{ij}$ we employ the following Taylor
expansion --- here we write the expansion $T(1) = T(0) + T'(0) + \dotsb$, 
where $T(z) := G_{i}\bigl(s, \gamma s + Dt \tth z \bigr)$:
\[
  \begin{split}
    G_{i}\bigl(s , \gamma s + D t\bigr) & = 
\sum_{j=0}^{d} \frac{(Dt)^{j}}{j!} \cdot \frac{\partial^j G_i}{\partial 
t^j} (s, \gamma s) \\
    & = \sum_{j=0}^{d} \frac{(Dt)^{j}}{j!} \cdot s^{d-j} \cdot
\frac{\partial^j G_i}{\partial t^j} (1, \gamma).
  \end{split}
\]
Hence $\mcal{M}_{ij} = \frac{D^j}{j!} \frac{\partial^j G_i}{\partial t^j}
(1, \gamma)$. In particular, we see that $D^{j}$ divides
$\mcal{M}_{ij}$ for all $i,j$. Moreover, $\mcal{M}_{00} = G_0(1, \gamma)$ and
thus $\mcal{M}_{00}$ does not vanish on~$A$, since by hypothesis $G_0(1,0) \neq
0$. Therefore $\mcal{M}_{00}$ is invertible in~$R'_{\mscr{A}}$, so we
can perform row reductions on~$\mcal{M}$ over~$R'_{\mscr{A}}$, obtaining a new
matrix $\overline{\mcal{M}}$ whose columns generate, over~$R'_{\mscr{A}}$, a 
module isomorphic to the one generated by the the columns of~$\mcal{M}$:
\[ 
  \begin{array}{ll}
    \overline{\mcal{M}}_{0j} := \mcal{M}_{0j} & \text{for all } j, \\
    \overline{\mcal{M}}_{ij} := \mcal{M}_{ij} - 
\frac{\mcal{M}_{i0}}{\mcal{M}_{00}} \mcal{M}_{0j} & \text{for all } j, \ 
\text{for all } i \geq 1.
  \end{array}
\]
\[
  \overline{\mcal{M}} \, = \, \left(
  \begin{array}{ccccc}
    \overline{\mcal{M}}_{00} & D \cdot \nicefrac{\overline{\mcal{M}}_{01}}{D} &
D^2 \cdot \nicefrac{\overline{\mcal{M}}_{02}}{D^2} & \cdots & D^d \cdot
\nicefrac{\overline{\mcal{M}}_{0d}}{D^d} \\
    0 &  D \cdot \nicefrac{\overline{\mcal{M}}_{11}}{D}  & D^2 \cdot
\nicefrac{\overline{\mcal{M}}_{12}}{D^2} & \cdots & D^d \cdot
\nicefrac{\overline{\mcal{M}}_{1d}}{D^d} \\
    \vdots \\
    0 &  D \cdot \nicefrac{\overline{\mcal{M}}_{b1}}{D} & D^2 \cdot
\nicefrac{\overline{\mcal{M}}_{b2}}{D^2} & \cdots & D^d \cdot
\nicefrac{\overline{\mcal{M}}_{bd}}{D^d}
  \end{array} \right).
\]
Notice that $\overline{\mcal{M}}_{11} = D \cdot \left( \frac{\partial
G_1}{\partial t}(1, \gamma) - \frac{G_1(1,\gamma)}{G_0(1, \gamma)}
\frac{\partial G_0}{\partial t}(1, \gamma) \right)$. In particular,
$\overline{\mcal{M}}_{11} / D$ does not vanish at~$A$, since by hypothesis
$G_1(1,0) = 0$ and $\frac{\partial G_1}{\partial t}(1,0) \neq 0$. Now observe
that the $R'_{\mscr{A}}$-module generated by the columns
of~$\overline{\mcal{M}}$ is isomorphic to the one generated by the columns of
\[
  \left( \begin{array}{ccccc}
    \overline{\mcal{M}}_{00} & D \cdot \nicefrac{\overline{\mcal{M}}_{01}}{D} &
D^2 \cdot \nicefrac{\overline{\mcal{M}}_{02}}{D^2} & \cdots & D^d \cdot
\nicefrac{\overline{\mcal{M}}_{0d}}{D^d} \\
    0 &  \nicefrac{\overline{\mcal{M}}_{11}}{D}  & D \cdot
\nicefrac{\overline{\mcal{M}}_{12}}{D^2} & \cdots & D^{d-1} \cdot
\nicefrac{\overline{\mcal{M}}_{1d}}{D^d} \\
    \vdots \\
    0 &  \nicefrac{\overline{\mcal{M}}_{b1}}{D} & D \cdot
\nicefrac{\overline{\mcal{M}}_{b2}}{D^2} & \cdots & D^{d-1} \cdot
\nicefrac{\overline{\mcal{M}}_{bd}}{D^d}
  \end{array} \right),
\]
where we divided by~$D$ all the rows from the second to the $(b+1)$-th.
At this point we can repeat the Gaussian elimination using the second row, and
then ``divide'' again by~$D$ all the rows from the third to the $(b+1)$-th.
The hypothesis on~$A$ and on the basis $G_0, \dotsc, G_b$ ensures that this 
process can be carried over for all rows. In this way we eventually achieve an 
echelonized form the matrix, which shows that the first $b+1$ columns generate 
freely a module isomorphic to~$M'$. Hence $M$ itself is locally free at~$A$, 
this proving the claim.
\end{proof}

\begin{definition}
\label{definition:schubert_cycle}
Let $V_a$ be a vector subspace of~$\C[s,t]_d$ of dimension~$a+1$ and let $c 
\in \N$. We consider the following subvariety of the Grassmannian~$\Gr{b}{d}$, 
which by construction is a Schubert subvariety:
\[
  Z_{V_a} \, := \, 
  \bigl\{ 
   \Lambda \in \Gr{b}{d} \, : \, 
   \mathrm{dim}(V_a + \Lambda) \leq c+1 
  \bigr\}.
\]
\end{definition}

We are interested in the cardinality of the 
set~$\left(\Phi_{V_b}\right)^{-1}\bigl(Z_{V_a}\bigr)$, which we proved 
to be finite in Section~\ref{finiteness} when $V_a$ and $V_b$ are general. In 
order to compute such number, we could use the machinery of intersection theory, 
in particular Porteous-Giambelli theorem. Unfortunately, the domain~$U$ of the 
regular map~$\Phi_{V_b}$ is not a projective variety, so the result cannot be 
applied directly.

On the other hand, the morphism~$\Phi_{V_b}$ gives a rational map on~$\p^3$, 
which we still denote by~$\Phi_{V_b}$. By what we proved in 
Proposition~\ref{proposition:bad_lines}, the locus~$U'$ where $\Phi_{V_b}$ is 
regular is bigger than~$U$ and it is the complement of a number of disjoint 
lines. However, notice that only the points in~$U$ correspond to automorphisms 
of~$\p^1$, hence they are the only ones to be considered in solving our initial 
problem.  From the theorem of resolution of 
indeterminacies of a rational map (see~\cite[Section~4.2 and 
Lemma~4.8]{Cutkosky2004}), we know that there exists a scheme~$\tilde{\p^3}$, a 
morphism $\tau \colon \tilde{\p^3} \longrightarrow \p^3$ and a morphism 
$\tilde{\Phi}_{V_b} \colon \tilde{\p^3} \longrightarrow \Gr{b}{d}$ making the 
following diagram commutative:
\[
  \xymatrix@C=4pc{\tilde{\p^3} \ar[rd]^-{\tilde{\Phi}_{V_b}} \ar[d]_{\tau} \\ 
  \p^3 \ar@{-->}[r]_-{\Phi_{V_b}} & \Gr{b}{d}}
\]
We are going to show that we can use the map~$\tilde{\Phi}_{V_b}$ to calculate 
the desired number. To do so, we have to exclude first that some point in 
$\tilde{\Phi}^{-1}_{V_b}(Z_{V_a})$ lies in $\tilde{\p^3} \setminus 
\tau^{-1}(U)$, namely does not correspond to an automorphism of~$\p^1$. In 
Lemma~\ref{lemma:general_preimage} we show that if $V_a$ and $V_b$ are general, 
then $\tilde{\Phi}_{V_b}^{-1}\bigl(Z_{V_a}\bigr)$ is always completely contained 
in~$\tau^{-1}(U)$.

\begin{lemma}
\label{lemma:general_preimage}
 Let $V_a, V_b \subseteq \C[s,t]_d$ be general vector subspaces of 
dimension~$a+1$ and~$b+1$, respectively. Suppose that 
Equation~\eqref{equation:finiteness_condition} holds. Then the preimage 
of~$Z_{V_a}$ under~$\tilde{\Phi}_{V_b}$ is contained in~$\tau^{-1}(U)$.
\end{lemma}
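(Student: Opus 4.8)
The plan is to use a dimension count on an incidence variety to show that, for general $V_a$, the (finite) set $\Phi_{V_b}^{-1}(Z_{V_a})$ cannot meet the locus $U' \setminus U$ where $\Phi_{V_b}$ is regular but which does not correspond to automorphisms of~$\p^1$. First I would identify this locus. The lines forming~$B$ consist of rank-one matrices $\left(\begin{smallmatrix} xu & xv \\ yu & yv \end{smallmatrix}\right)$, which satisfy $\alpha\delta - \beta\gamma = 0$ and hence lie on the quadric~$Q$; therefore $U' \setminus U = Q \setminus B$. Since $Q$ is a surface in~$\p^3$ and $B$ is a finite union of lines, this locus is a dense open subset of~$Q$ and so has dimension~$2$. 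By Proposition~\ref{proposition:bad_lines} the map $\Phi_{V_b}$ restricts to a genuine morphism on all of~$U' \supseteq U' \setminus U$, so $\Phi_{V_b}(A) \in \Gr{b}{d}$ is a well-defined $(b+1)$-dimensional subspace for every $A \in U' \setminus U$ (we will never need its explicit value).

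Next I would form the incidence variety
\[
 J \, := \, \bigl\{ (A, [V_a]) \in (U' \setminus U) \times \Gr{a}{d} \, : \, \Phi_{V_b}(A) \in Z_{V_a} \bigr\},
\]
which is closed in $(U' \setminus U) \times \Gr{a}{d}$ because $\Phi_{V_b}|_{U'}$ is regular and the conditions defining~$Z_{V_a}$ vary algebraically with~$V_a$. Projecting~$J$ onto its first factor, the fiber over a point~$A$ is
\[
 \bigl\{ [V_a] \in \Gr{a}{d} \, : \, \dim\bigl(V_a \cap \Phi_{V_b}(A)\bigr) \geq a+b-c+1 \bigr\},
\]
which is a Schubert variety attached to a flag refining the fixed $(b+1)$-dimensional subspace~$\Phi_{V_b}(A)$. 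Exactly as in the codimension computation of Section~\ref{finiteness}, this Schubert variety has codimension $(d-c)(a+b-c+1)$ in~$\Gr{a}{d}$ (see Equation~\eqref{equation:codimension} and~\cite[Proposition~12.2]{Harris1995}); by Equation~\eqref{equation:finiteness_condition} this equals~$3$.

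Finally I would conclude by a dimension count. Since every nonempty fiber of the projection $J \to (U'\setminus U)$ has dimension exactly $\dim \Gr{a}{d} - 3$ and the base has dimension~$2$, the theorem on the dimension of fibers of a regular map (see~\cite[Section~6.3, Theorem~1.25]{Shafarevich2013}) gives
\[
 \dim J \, \leq \, 2 + \bigl( \dim \Gr{a}{d} - 3 \bigr) \, = \, \dim \Gr{a}{d} - 1 .
\]
Projecting~$J$ to the second factor~$\Gr{a}{d}$, the image is therefore a constructible set of dimension at most $\dim \Gr{a}{d} - 1$, hence not dense. Thus for a general~$V_a$ there is no $A \in U' \setminus U$ with $\Phi_{V_b}(A) \in Z_{V_a}$; equivalently $\Phi_{V_b}^{-1}(Z_{V_a})$ is contained in~$U$, as desired.

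The main obstacle is the \emph{uniformity} of the fiber codimension over the whole base: one must ensure that the Schubert condition $\dim(V_a \cap \Phi_{V_b}(A)) \geq a+b-c+1$ cuts out codimension exactly~$3$ for \emph{every} $A \in U' \setminus U$, including the degenerate points on the quadric, rather than dropping codimension on some subset of~$Q \setminus B$. This is guaranteed because $\Phi_{V_b}(A)$ always has the full dimension~$b+1$ (the extended map lands in~$\Gr{b}{d}$, by Proposition~\ref{proposition:bad_lines}), and the codimension of such a Schubert variety is determined solely by the associated partition and not by the particular subspace~$\Phi_{V_b}(A)$.
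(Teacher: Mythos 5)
Your proof is correct, but it takes a genuinely different route from the paper. The paper does not build an incidence correspondence over $U' \setminus U$: instead it passes to the image side, letting $W$ be the Zariski closure of $\Phi_{V_b}(U' \setminus U)$ in $\Gr{b}{d}$ (an integral subscheme of dimension at most~$2$, since $U'\setminus U$ sits inside the quadric~$Q$), notes that $Z_{V_a}$ has codimension $(d-c)(a+b-c+1)=3$, and then invokes Kleiman's transversality theorem: for $g$ in a dense open subset of $\pgl(d+1,\C)$ the translate $g \cdot Z_{V_a}$ meets $W$ in dimension $-1$, i.e.\ not at all, and since $g \cdot Z_{V_a} = Z_{g\cdot V_a}$ and $\pgl(d+1,\C)$ acts transitively on $\Gr{b}{d}$, a general translate corresponds exactly to a general choice of~$V_a$. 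Your argument replaces Kleiman's theorem by an elementary count: the incidence variety $J$ over $Q \setminus B$ has Schubert-variety fibers of codimension~$3$ (uniformly in $A$, as you correctly note, because the codimension depends only on the partition), so $\dim J \le 2 + \dim\Gr{a}{d} - 3$, and its projection to $\Gr{a}{d}$ cannot dominate. What each approach buys: yours is self-contained (no transversality theorem needed, only Chevalley constructibility and the fiber-dimension theorem) and mirrors the incidence-variety technique the paper already uses in Section~\ref{finiteness}; the paper's is shorter, outsources the generic-position argument to a standard citation, and makes the role of the group action on the Grassmannian explicit, which is the same homogeneity your final paragraph implicitly relies on when asserting that all fibers of $J$ have the same codimension.
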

\begin{proof}
 We have to show that no element in $\tilde{\p^3} \setminus \tau^{-1}(U)$ 
belongs to $\tilde{\Phi}_{V_b}^{-1}\bigl(Z_{V_a}\bigr)$. Let $W$ be the Zariski 
closure of $\tilde{Phi}_{V_b}\bigl(\tilde{\p^3} \setminus \tau^{-1}(U)\bigr)$ 
in~$\Gr{b}{d}$, with the reduced structure. Notice that the irreducible 
components of~$W$ are integral subschemes of~$\Gr{b}{d}$ of dimension at 
most~$2$, since $\tilde{\p^3} \setminus \tau^{-1}(U)$ is contained in the 
preimage via~$\tau$ of the quadric $\{ \alpha \delta - \beta \gamma = 0\}$ 
in~$\p^3$. We show that, since $V_a$ is general, it is always possible to 
avoid~$W$ with~$Z_{V_a}$. Notice that the algebraic group~$\pgl(d+1, \C)$ acts 
transitively on~$\Gr{b}{d}$ via its standard action on~$\p^d$; if $g \in 
\pgl(d+1, \C)$, we denote by~$g \cdot Z_{V_a}$ the translate of~$Z_{V_a}$ under 
the action of~$g$. Moreover, a computation similar to the one providing 
Equation~\eqref{equation:codimension} shows that $Z_{V_a}$ has codimension 
$(d-c)(a+b-c+1)$, which equals~$3$ because we suppose that 
Equation~\eqref{equation:finiteness_condition} holds. Then, by Kleiman's 
transversality theorem \cite[Corollary~4]{Kleiman1974}, the dimension of the 
intersection $(g \cdot Z_{V_a}) \cap W$ is $-1$ for every~$g$ belonging to an 
open subset of~$\pgl(d+1, \C)$. This means that if $V_a$ is general, then we can 
suppose that $Z_{V_a}$ and $W$ do not intersect, and this concludes the proof. 
\end{proof}

Since, as it will be made clear in 
Proposition~\ref{proposition:description_bundle}, the morphism~$\tau$ is an 
isomorphism outside the indeterminacy locus of~$\Phi_{V_b}$, then the 
cardinality of~$\Phi_{V_b}^{-1}(Z_{V_a})$ equals the cardinality 
of~$\tilde{\Phi}_{V_b}^{-1}(Z_{V_a})$. Moreover, a by-product of 
Section~\ref{finiteness} is that, by the fact that $V_a$ and $V_b$ are general, 
the fiber~$\Phi_{V_b}^{-1}(Z_{V_a})$ is constituted of smooth points. Hence, to 
compute the number we are interested in, it is enough to compute the degree of 
the $0$-cycle~$\tilde{\Phi}_{V_b}^{*}([Z_{V_a}])$, where $[Z_{V_a}]$ is the 
Schubert cycle given by the Schubert variety~$Z_{V_a}$.

In order to compute this degree, let us start by noticing that we can 
express~$[Z_{V_a}]$ in terms of the universal bundle~$\mscr{U}^{*}$ of the 
Grassmannian~$\Gr{b}{d}$. In fact, if $h_1, \dotsc, h_{d-a}$ are linear forms 
defining~$V_a$, we can interpret them as global sections 
of~$\mscr{U}^{*}$, and then $Z_{V_a}$ becomes the locus where the sections 
$h_1, \dotsc, h_{d-a}$ have rank at most~$c-a-1$ (see 
\cite[Example~4.7]{Arrondo2010}). Now Schubert calculus tells us (see 
\cite[Example~4.9]{Arrondo2010}) that the cycle associated to this locus is 
given by the determinant
\begin{equation}
\label{equation:determinant_schubert}
 \resizebox{.92\textwidth}{!}{$
 \left|
 \!\!
 \begin{array}{cccc}
  \chern{(b+1)-(c-a)}{\mscr{U}^{*}} & 
   \chern{(b+2) - (c-a)}{\mscr{U}^{*}} & 
   \cdots & \chern{(b+d-a) - 2(c-a)}{\mscr{U}^{*}} \\
  \chern{b-(c-a)}{\mscr{U}^{*}} & 
   \chern{(b+1) - (c-a-1)}{\mscr{U}^{*}} & 
   \cdots & \chern{(b+d-a) - 2(c-a)+1}{\mscr{U}^{*}} \\
  \vdots \\
  \chern{b-(d-a)+2}{\mscr{U}^{*}} & 
   \chern{b-(d-a)+3}{\mscr{U}^{*}} & 
   \cdots & \chern{(b+1) - (c-a)}{\mscr{U}^{*}} \\
 \end{array}
 \!\!
 \right|
 $}
\end{equation}
where $\chern{i}{\mscr{U}^{*}}$ denotes the $i$-th Chern class of the vector 
bundle~$\mscr{U}^{*}$. 

Since $\tilde{\Phi}_{V_b}$ is a morphism to a Grassmannian, by what we 
reported at the beginning of the section there exists a vector 
bundle~$\mscr{Q}$ on~$\tilde{\p^3}$ and a vector subspace of sections 
of~$\mscr{Q}$ such that the morphism they induce is~$\tilde{\Phi}_{V_b}$. It 
follows from the functoriality of the pullback that the 
cycle~$\tilde{\Phi}_{V_b}^{*}([Z_{V_a}])$ is given by the determinant in 
Equation~\eqref{equation:determinant_schubert}, after 
substituting~$\mscr{U}^{*}$ with~$\mscr{Q}$. This is the so-called 
Porteous-Giambelli formula, see \cite[Theorem~3.10]{Arrondo2010} and 
\cite[Corollary~6]{Kempf1974}.

Using a ``Gaussian reduction'' technique as in 
Proposition~\ref{proposition:bad_lines}, we 
can express in Proposition~\ref{proposition:description_bundle} the 
variety~$\tilde{\p^3}$ and the vector bundle~$\mscr{Q}$ in a very concrete 
way. We proceed following closely the technique for the resolution of 
indeterminacies in the case of maps to a projective space described in 
\cite[Chapter~II, Example~7.17.3]{Hartshorne1977}. Before stating the result, 
we need some preliminary considerations.

The right action of~$\pgl(2, \C)$ on~$\p^3$ leaves invariant each of the 
$(d-b)(b+1)$ lines where the map $\Phi_{V_b}$ is not defined, and so 
by the universal property of the blowup (see 
\cite[Corollary~II.7.15]{Hartshorne1977}) it induces an action 
on the blowup~$\bl{B}{\p^3}$ of~$\p^3$ at these lines. In fact\footnote{We 
report here an argument by Daniel Loughran, available at 
\url{https://mathoverflow.net/questions/122922/group-actions-on-blow-ups.}}, 
since $B$ is invariant under the right action of~$\pgl(2, \C)$, then its 
preimage under the map $\pgl(2,\C) \times \p^3 \longrightarrow \p^3$ is 
$\pgl(2,\C) \times B$. Therefore, by the universal property we get a morphism 
$\pgl(2,\C) 
\times \bl{B}{\p^3} \longrightarrow \bl{B}{\p^3}$. By construction, for all $P 
\in \bl{B}{\p^3} \setminus E$, where $E$ is the exceptional divisor, we have
\[
 P \cdot (\sigma \cdot \sigma') = (P \cdot \sigma) \cdot \sigma', 
\qquad P \cdot 
 \left(
 \begin{smallmatrix}
  1 & 0 \\ 0 & 1
 \end{smallmatrix}
 \right)  = P \quad \quad 
 \text{for all } \sigma, \sigma' \in \pgl(2,\C)
\]
hence by continuity these equations hold on the whole~$\bl{B}{\p^3}$, thus 
determining the desired action.

We define a coherent sheaf $\widehat{\mscr{Q}}$ on~$\bl{B}{\p^3}$ as follows. 
Let $\widehat{\tau} \colon \bl{B}{\p^3} \longrightarrow \p^3$ be the blow down 
map. We pull back along $\widehat{\tau}$ the map $\mscr{O}_{\p^3}^{d+1} 
\longrightarrow \mscr{O}_{\p^3}(d)^{b+1}$ defined by the matrix~$\mcal{M}$. 
Then $\widehat{\mscr{Q}}$ is defined as the image of this homomorphism of 
sheaves. It is generated by the global sections $\widehat{\tau}^{\ast}(m_0), 
\dotsc, \widehat{\tau}^{\ast}(m_d)$, where $m_0, \dotsc, m_d$ are the 
columns of the matrix~$\mcal{M}$. The right action of~$\pgl(2,\C)$ 
on~$\bl{B}{\p^3}$ preserves~$\widehat{\mscr{Q}}$. In fact, by 
Lemma~\ref{lemma:right_action} we have that the right action by an element 
$\sigma \in \pgl(2,\C)$ sends each~$m_i$ to a complex linear combination $\sum 
\lambda_{ij} m_j$. Since this holds at every point in $\p^3 \setminus B$, the 
same is true for each $\widehat{\tau}^{\ast}(m_i)$ at every point outside the 
exceptional divisor. Hence, by continuity this must hold on the whole blowup, so 
with this action $\widehat{\mscr{Q}}$ becomes a $\pgl(2,\C)$-equivariant vector 
bundle.

\begin{proposition}
\label{proposition:description_bundle}
With the previously introduced notation, the variety~$\tilde{\p^3}$ can be 
taken to be the blow up of~$\p^3$ at~$B$, the set of $(b + 1)(d - b)$ disjoint 
lines introduced in Proposition~\ref{proposition:bad_lines}, with the 
choice of~$\tau$ as the corresponding blow down morphism. With this choice 
$\mscr{Q}$ is the sheaf associated to the module spanned by the pullbacks 
via~$\tau$ of the generators of~$M$.
\end{proposition}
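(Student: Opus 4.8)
The plan is to prove that blowing up $\p^3$ along $B$ already resolves the indeterminacies of the rational map $\Phi_{V_b}$, and that the sheaf $\widehat{\mscr{Q}}$ constructed above is the bundle defining the resulting morphism; one may then take $\tilde{\p^3} = \bl{B}{\p^3}$ and $\mscr{Q} = \widehat{\mscr{Q}}$. First I would record the easy half. The blow down $\widehat{\tau} \colon \bl{B}{\p^3} \longrightarrow \p^3$ is an isomorphism over $U' = \p^3 \setminus B$, and there, by Proposition~\ref{proposition:bad_lines}, $\Phi_{V_b}$ is already a morphism represented by the locally free sheaf~$\mscr{M}$. Since $\widehat{\mscr{Q}}$ restricts to $\widehat{\tau}^{\ast}\mscr{M}$ away from the exceptional divisor~$E$, the morphism to~$\Gr{b}{d}$ induced by $\widehat{\mscr{Q}}$ and its sections agrees with $\Phi_{V_b} \circ \widehat{\tau}$ on $\bl{B}{\p^3} \setminus E$. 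Everything therefore reduces to proving that $\widehat{\mscr{Q}}$ is locally free of rank~$b+1$ along~$E$.

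To check local freeness along~$E$ I would exploit the equivariance. By Remark~\ref{remark:preserved_lines} the right action of~$\pgl(2,\C)$ fixes each line of~$B$ and acts transitively on it, while, as in Proposition~\ref{proposition:bad_lines}, the left action lets us work with modules merely isomorphic to~$M$; combining the two reduces the verification to a single line. Concretely, I would normalize to the line $\ell = \{\gamma = \delta = 0\}$ attached to the point $(1:0) \in \widehat{B}$ and, by Lemma~\ref{lemma:general_orders}, choose a basis $G_0, \dotsc, G_b$ of~$V_b$ with $\ord_{(1:0)} G_i = i$ for $i \le b-1$ and $\ord_{(1:0)} G_b = b+1$. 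Recalling from Proposition~\ref{proposition:bad_lines} that after the substitutions $\alpha = 1$, $s \mapsto s - \beta t$ and $D = \delta - \beta\gamma$ the generating matrix takes the form
\[
  \mcal{M}_{ij} \, = \, \frac{D^{j}}{j!}\,\frac{\partial^{j} G_i}{\partial t^{j}}(1,\gamma),
\]
one sees that $\ell = \{\gamma = 0,\ D = 0\}$ and that $\partial_t^{j} G_i(1,\gamma)$ vanishes to order $\max(\ord_{(1:0)}G_i - j,\,0)$ in~$\gamma$.

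The main step is the local computation on the blow up. Blowing up~$\ell$ and passing to the chart $D = \gamma\eta$, the $j$-th column of~$\mcal{M}$ acquires the factor $D^{j} = \gamma^{j}\eta^{j}$, so $\mcal{M}_{ij}$ vanishes in~$\gamma$ to order $\max(j,\,\ord_{(1:0)}G_i)$. Here the point of the order jump becomes visible: every entry of the last row is divisible by $\gamma^{b+1}$, whereas the row coming from~$G_i$ with $i \le b-1$ is divisible only by~$\gamma^{i}$. I would then peel off these powers of~$\gamma$ row by row --- an abstract module isomorphism, legitimate since every generator has the relevant coordinate divisible by the corresponding power --- and run the same Gaussian elimination as in Proposition~\ref{proposition:bad_lines}. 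The extra factor of~$\gamma$ produced by the blow up is exactly what supplies a unit pivot for the last row, so the process terminates in an echelon form whose first $b+1$ columns generate freely a rank-$(b+1)$ module at every point of the chart lying over~$\ell$, including on~$E$. The symmetric computation in the chart $\gamma = D\xi$ covers the rest of~$E$, and this exhibits $\widehat{\mscr{Q}}$ as locally free of rank~$b+1$ everywhere.

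Finally, a locally free sheaf generated by the images of the pullback sections defines a morphism $\bl{B}{\p^3} \longrightarrow \Gr{b}{d}$ extending $\Phi_{V_b} \circ \widehat{\tau}$; since the target is separated this extension is unique, so it realizes the resolution of indeterminacies. Taking $\tilde{\p^3} = \bl{B}{\p^3}$ and $\tau = \widehat{\tau}$ then identifies $\tilde{\Phi}_{V_b}$ with this morphism and $\mscr{Q}$ with $\widehat{\mscr{Q}}$, as claimed. \emph{The main obstacle} is the local computation of the third paragraph: one must track correctly the powers of the exceptional coordinate in each chart of the blow up and verify that the single jump $\ord_{(1:0)} G_b = b+1$ is resolved by one blow up, so that the first $b+1$ columns become a genuine local frame. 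The equivariance disposes of the reduction to one line, but the chart bookkeeping and the final echelonization require care.
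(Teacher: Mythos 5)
Your proposal is correct and takes essentially the same approach as the paper: blow up $\p^3$ along $B$, pull back the generators of $M$, and verify local freeness along the exceptional divisor by the Taylor-expansion and Gaussian-elimination technique of Proposition~\ref{proposition:bad_lines}, with the order jump $\ord_{(1:0)}G_b = b+1$ from Lemma~\ref{lemma:general_orders} supplying the extra factor of the exceptional coordinate that rescues the last pivot. The paper differs only in bookkeeping --- it uses a Proj presentation of the blowup, reduces to the two points $(1:0:1:0:0)$ and $(1:0:0:1:0)$ via the right $\pgl(2,\C)$-action, and, rather than peeling exceptional-coordinate powers off the rows, keeps the last pivot in the form $w \cdot (\mathrm{unit})$ and concludes by a principal-ideal argument --- and note that in your second chart the free frame is columns $0, \dotsc, b-1, b+1$ rather than the first $b+1$, the same asymmetry the paper buries in its ``analogous way''.
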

\begin{proof}
  Define $\widehat{\p^3} = \bl{B}{\p^3}$ and $\widehat{\tau} \colon 
\widehat{\p^3} \longrightarrow \p^3$ to be the corresponding blow down 
morphism. We prove that the sheaf~$\widehat{\mscr{Q}}$ associated to the 
module spanned by the pullbacks of the generators of~$M$ is locally free. This 
implies that we can take $\widehat{\p^3} = \tilde{\p^3}$, $\widehat{\tau} = 
\tau$ and $\widehat{\mscr{Q}} = \mscr{Q}$, so the claim is proved. Since 
$\widehat{\tau}$ is an isomorphism over~$U' = \p^3 \setminus B$, and $\mscr{M}$ 
is locally free on~$U'$ by Proposition~\ref{proposition:bad_lines}, then 
$\widehat{\mscr{Q}}$ is locally free on~$\tau^{-1}(U')$. Hence we only need to 
check that $\widehat{\mscr{Q}}$ is locally free on the exceptional divisors of 
$\widehat{\p^3}$. Since this is a local question, and all the lines in~$\p^3$ 
forming~$B$ are disjoint, we can prove the claim supposing that $B$ is 
constituted by a single line~$L$. By a suitable change of coordinates 
in~$\p^3$ induced by the left action of~$\pgl(2,\C)$, 
we can suppose that $L$ is given by $\{ \gamma = \delta = 0\}$. Hence 
$\bl{L}{\p^3}$ can be written as~$\operatorname{Proj}(\tilde{R})$, with 
$\tilde{R} = \C[\alpha, \beta, \gamma, \overline{\gamma}, \delta, 
\overline{\delta},  w] / (\gamma - \overline{\gamma} w, \delta - 
\overline{\delta} w)$, where we take the $\Z^2$-grading described by the columns 
of the following matrix:
\[
  \bordermatrix{
    & \alpha & \beta & \gamma & \overline{\gamma} & \delta & \overline{\delta} 
    & w \cr
    & 1 & 1 & 1 & 1 & 1 & 1 & 0 \cr
    & 1 & 1 & 1 & 0 & 1 & 0 & 1
  }.
\]
Moreover, one notices that $\tilde{R} \cong \C[\alpha, \beta, 
\overline{\gamma}, \overline{\delta}, w]$ with the previously defined grading. 
The exceptional divisor~$E$ of $\bl{L}{\p^3}$ is then the subvariety $\{ w = 0 
\}$. We pick $P = (\alpha: \beta: \overline{\gamma}: \overline{\delta}: 0) \in 
E$, and we prove that $\widehat{\mscr{Q}}$ is free at~$P$. In order to 
simplify our computations, we consider the right action of~$\pgl(2, \C)$ 
on~$\bl{L}{\p^3}$, which with our choice of coordinates is given as follows: 
if 
$\left( \begin{smallmatrix} u & z \\ y & v 
\end{smallmatrix} \right)$ is an element of~$\pgl(2, \C)$, then its action on 
points of~$\bl{L}{\p^3}$ is given by
\[
 \begin{pmatrix}
  \alpha \\
  \beta \\
  \bar{\gamma} \\
  \bar{\delta} \\
  w
 \end{pmatrix}
 \quad \mapsto \quad
 \begin{pmatrix}
  \alpha u + \beta y \\
  \alpha z + \beta v \\
  \overline{\gamma} u + \overline{\delta} y \\
  \overline{\gamma} z + \overline{\delta} v \\
  w
 \end{pmatrix} .
\]
Therefore, under this action, points in the exceptional divisor 
of~$\bl{L}{\p^3}$ are equivalent to either $(1: 0: 1: 0: 0)$ or $(1: 0: 
0: 1: 0)$, depending on whether $(\alpha:\beta)=(\bar{\gamma}:\bar{\delta})$ 
or $(\alpha:\beta)\neq(\bar{\gamma}:\bar{\delta})$ as points in~$\p^1$. 
Let us consider the case $P = (1: 0: 1: 0: 0)$. Then the image of~$P$ under the 
blow down map, namely $\left( \begin{smallmatrix} 1 & 0 \\ 0 & 0 
\end{smallmatrix} \right)$, belongs to~$B$, and this implies that $(1:0) \in 
\widehat{B}$. Hence by Lemma~\ref{lemma:general_orders} we are in the case 
where $\ord_{(1:0)} G_i = i$ for $i \in \{0, \dotsc, b-1\}$ and $\ord_{(1:0)}G_b 
= b+1$. The module~$\widehat{Q}$ determining $\widehat{\mscr{Q}}$ is generated 
by the columns of the matrix
\[ 
  \widehat{\mcal{Q}}_{ij} \, = \, 
  \frac{\partial^{d} G_{i}(\alpha s + \beta t, \overline{\gamma} w s + 
  \overline{\delta} w t)}{\partial s^{d-j} \partial t^{j}}.
\]
Since we can take $\beta, \overline{\delta}$ and $w$ to be local coordinates 
for~$P$, using simplifications as in the proof of 
Proposition~\ref{proposition:bad_lines} 
we can consider the matrix
\[ 
  \widehat{\mcal{Q}}_{ij} \, = \, 
  \frac{\partial^{d} G_{i}(s, w s + w 
(\overbrace{\overline{\delta}-\beta}^{:=\tilde{D}}) t)}{\partial s^{d-j} 
\partial t^{j}}.
\]
As in Proposition~\ref{proposition:bad_lines}, we can expand
\[
 G_i(s, w s + w \tilde{D} t) \, = \, 
 \sum_{j=0}^{d} \frac{(w \tilde{D} t)^j}{j!} s^{d-j} 
 \frac{\partial^j G_i}{\partial t^j}(1,w).
\]
We can perform the Gaussian elimination that was employed in 
Proposition~\ref{proposition:bad_lines}. The 
difference is that, here, at the $k$-th iteration of the elimination we can 
divide each row from the $(k+1)$-th to the $(b+1)$-th by~$w\tilde{D}$; 
moreover, in this case the Gaussian elimination can be performed only until 
the last-but-one row because of the orders of the polynomials~$G_i$. The 
matrix we obtain has the following shape
\begin{equation}
\label{equation:gaussian_reduced}
 \begin{pmatrix}
  \overline{\mcal{Q}}_{00} & w \tilde{D} \cdot \ast & w^2 \tilde{D}^2 \cdot 
\ast & \cdots \\
  0 & \overline{\mcal{Q}}_{11} & w \tilde{D} \cdot \ast & w^2 \tilde{D}^2 \cdot 
\ast & \cdots \\
  \vdots & 0 & \ddots & \ddots \\
  \vdots & \vdots & & \overline{\mcal{Q}}_{b-1, b-1} & w \tilde{D} \cdot \ast 
& \cdots \\
  0 & 0 & \cdots & 0 &  \overline{\mcal{Q}}_{bb} & w \tilde{D} \cdot \ast 
& \cdots 
 \end{pmatrix} ,
\end{equation}
where the elements $\overline{\mcal{Q}}_{00}, \dotsc, \overline{\mcal{Q}}_{b-1, 
b-1}$ are invertible in the local ring at~$P$, while this is not the case 
for~$\overline{\mcal{Q}}_{bb}$. However, since $\ord_{(1:0)} G_{b} = b+1$, we 
have that $\overline{\mcal{Q}}_{bb} = w \cdot 
\overline{\overline{\mcal{Q}}}_{bb}$ for some invertible element 
$\overline{\overline{\mcal{Q}}}_{bb}$. This implies that the ideal generated by
the last entries of the columns from the $(b+1)$-th to the $(d+1)$-th is 
principal, and so the module spanned by the columns of~$\widehat{\mcal{Q}}$ is 
free at~$P$. 

The case when $P = (1: 0: 0: 1: 0)$ can be treated in an analogous way.
\end{proof}

The following proposition describes the class in the Chow ring of 
the subvariety we are interested in in terms of the Chern classes of the vector 
bundle~$\mscr{Q}$.

\begin{proposition}
\label{proposition:pullback_schubert}
  Let $V_a$ and $V_b$ be general vector subspaces of~$\C[s,t]_{d}$ of 
dimension~$a+1$ and~$b+1$, respectively. Suppose that 
\[
 (a + b - c + 1)(d - c) \, = \, 3.
\]
Let $\mscr{Q}$ be the vector bundle on~$\tilde{\p^3}$ introduced in 
Proposition~\ref{proposition:description_bundle}. Let $\tilde{\Phi}_{V_b} 
\colon \tilde{\p^3} \longrightarrow \Gr{b}{d}$ be the morphism induced 
by~$\mscr{Q}$ and let $Z_{V_a}$ be the Schubert variety in~$\Gr{b}{d}$ as in 
Definition~\ref{definition:schubert_cycle}. Then the class of the
pullback~$\tilde{\Phi}_{V_b}^{*}\bigl([Z_{V_a}]\bigr)$ in the Chow group 
of~$\tilde{\p^3}$ equals
\[
  \begin{array}{lcl}
    \chern{3}{\mscr{Q}} & & 
     \text{if } a + b + 1 - c = 3 \text{ and } d - c = 1, \\
    \chern{3}{\mscr{Q}} - 2\chern{1}{\mscr{Q}}\chern{2}{\mscr{Q}} + 
     \chern{1}{\mscr{Q}}^3 & & 
     \text{if } a + b + 1 - c = 1 \text{ and } d - c = 3.
  \end{array}
\]
\end{proposition}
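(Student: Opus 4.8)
The plan is to apply the Porteous-Giambelli formula recorded in Equation~\eqref{equation:determinant_schubert}, substituting the universal bundle~$\mscr{U}^{*}$ by the bundle~$\mscr{Q}$ on~$\tilde{\p^3}$, and then to simplify the resulting determinant in each of the two cases. First I would determine the precise size and the entries of the determinant in each case by plugging in the numerical constraints. In case~\eqref{equation:finiteness_condition_caseB}, where $a+b+1-c=3$ and $d-c=1$, the Schubert variety~$Z_{V_a}$ has codimension~$3$ and corresponds to a single-part partition, so the determinant should collapse to a single Chern class: I expect the formula to reduce immediately to $\chern{3}{\mscr{Q}}$. In case~\eqref{equation:finiteness_condition_caseA}, where $a+b+1-c=1$ and $d-c=3$, the relevant partition is $\lambda = (3)$ (a single row of length~$3$), which again gives codimension~$3$ but whose Giambelli determinant, written in terms of the special Schubert classes expressed as Chern classes of~$\mscr{Q}$, should be a $3 \times 3$ determinant that expands to $\chern{3}{\mscr{Q}} - 2\chern{1}{\mscr{Q}}\chern{2}{\mscr{Q}} + \chern{1}{\mscr{Q}}^3$.

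Concretely, I would first match the indices in Equation~\eqref{equation:determinant_schubert} against the partition~$\lambda$ coming from the rank condition defining~$Z_{V_a}$. The key bookkeeping step is to translate the rank-at-most-$(c-a-1)$ degeneracy condition on the $d-a$ sections $h_1,\dotsc,h_{d-a}$ of~$\mscr{U}^{*}$ into the correct Schubert index, and thereby to read off which Chern classes $\chern{i}{\mscr{U}^{*}}$ appear on the diagonal and off-diagonals of the Giambelli matrix. Once the matrix is written down, I replace~$\mscr{U}^{*}$ by~$\mscr{Q}$ by functoriality of the pullback, exactly as stated after Equation~\eqref{equation:determinant_schubert}, and the computation becomes purely a matter of expanding a small determinant. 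In case~\eqref{equation:finiteness_condition_caseA} the determinant is

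\[
 \left|
 \begin{array}{ccc}
  \chern{1}{\mscr{Q}} & \chern{2}{\mscr{Q}} & \chern{3}{\mscr{Q}} \\
  1 & \chern{1}{\mscr{Q}} & \chern{2}{\mscr{Q}} \\
  0 & 1 & \chern{1}{\mscr{Q}}
 \end{array}
 \right|,
\]

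whose expansion gives exactly $\chern{1}{\mscr{Q}}^3 - 2\chern{1}{\mscr{Q}}\chern{2}{\mscr{Q}} + \chern{3}{\mscr{Q}}$.

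The main obstacle I anticipate is not the determinant expansion, which is routine, but rather justifying that the Giambelli indices obtained from the abstract Schubert condition defining~$Z_{V_a}$ really do produce the stated partition and the displayed matrix shape in each numerical regime. In particular I must verify that the Chern classes of~$\mscr{Q}$ beyond the third vanish on~$\tilde{\p^3}$ for dimension reasons (since $\tilde{\p^3}$ is three-dimensional, $\chern{i}{\mscr{Q}} = 0$ for $i > 3$ in the Chow ring), so that the higher-order entries of the Giambelli determinant drop out and the $(a+1)\times(a+1)$-type determinant genuinely truncates to the small determinants above. This dimensional vanishing is what makes both cases collapse to a codimension-$3$ class living in the top Chow group of~$\tilde{\p^3}$, which is precisely the setting needed so that taking degrees in the following propositions yields the sought number of solutions.
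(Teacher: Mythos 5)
Your proposal is correct and takes essentially the same route as the paper's own proof: apply the Porteous--Giambelli formula of Equation~\eqref{equation:determinant_schubert} with $\mscr{U}^{*}$ replaced by~$\mscr{Q}$, observe that the resulting determinant is the $1 \times 1$ determinant $\chern{3}{\mscr{Q}}$ when $a+b+1-c=3$, $d-c=1$, and the displayed $3 \times 3$ determinant when $a+b+1-c=1$, $d-c=3$, and expand. One small correction: the Giambelli matrix in Equation~\eqref{equation:determinant_schubert} has size $(d-c) \times (d-c)$, so it is exactly of size $1$ or $3$ in the two cases and no truncation of higher Chern classes for dimension reasons (nor any collapse of an ``$(a+1)\times(a+1)$-type'' determinant) is needed.
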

\begin{proof}
As we mentioned before Proposition~\ref{proposition:description_bundle}, the 
statement follows from the Porteous-Giambelli formula, which states that the 
pullback we are interested in is given by the determinant:
\[
 \left|
 \begin{array}{cccc}
  \chern{(b+1)-(c-a)}{\mscr{Q}} & 
   \chern{(b+2) - (c-a)}{\mscr{Q}} & 
   \cdots & \chern{(b+d-a) - 2(c-a)}{\mscr{Q}} \\
  \chern{b-(c-a)}{\mscr{Q}} & 
   \chern{(b+1) - (c-a)}{\mscr{Q}} & 
   \cdots & \chern{(b+d-a) - 2(c-a)+1}{\mscr{Q}} \\
  \vdots \\
  \chern{b-(d-a)+2}{\mscr{Q}} & 
   \chern{b-(d-a)+3}{\mscr{Q}} & 
   \cdots & \chern{(b+1) - (c-a)}{\mscr{Q}} \\
 \end{array}
 \right|.
\]
In the first case we obtain $\chern{3}{\mscr{Q}}$, while in the second case we 
get
\[
  \left|
  \begin{array}{ccc}
    \chern{1}{\mscr{Q}} & \chern{2}{\mscr{Q}} & \chern{3}{\mscr{Q}} \\
    1 & \chern{1}{\mscr{Q}} & \chern{2}{\mscr{Q}} \\
    0 & 1 & \chern{1}{\mscr{Q}}
  \end{array}
  \right| \, = \, \chern{1}{\mscr{Q}}^3 + \chern{3}{\mscr{Q}} - 
2\chern{1}{\mscr{Q}}\chern{2}{\mscr{Q}}. \qedhere
\]
\end{proof}

Proposition~\ref{proposition:degrees} computes the degrees of the cycles 
obtained in Proposition~\ref{proposition:pullback_schubert} in terms of the 
parameters $a$, $b$, $c$ and $d$ of our initial problem. The result is an 
application of the so-called \emph{Bott residue formula}, which states the 
following. Suppose that $X$ is a smooth variety on which a torus~$\T$ acts, and 
let $\mscr{E}$ be a $\T$-equivariant vector bundle on~$X$ of rank~$r$. Let $p 
\in \C[Z_0, \dotsc, Z_r]$ be a polynomial, and denote by~$p(\mscr{E})$ the 
expression $p\bigl(\chern{0}{\mscr{E}}, \dotsc, \chern{r}{\mscr{E}}\bigr)$. 
Then, the degree of~$p(\mscr{E})$ can be computed by considering the fixed 
locus~$X^{\T}$ of the action of~$\T$ on~$X$, namely
\begin{equation}
\label{equation:bott_formula}
 \deg \bigl(p(\mscr{E})\bigr) = \deg \sum_{L \subseteq X^{\T}} i^{L}_{\ast} 
\left( \frac{p^{\T}(\mscr{E}_{|_L})}{\chernT{d_L}{\mscr{N}_{L/X}}} \right),
\end{equation}
where the sum varies over the components~$L$ of the fixed locus~$X^{\T}$, the 
number~$d_L$ is the codimension of~$L$ in~$X$, the sheaf $\mscr{N}_{L/X}$ is the 
normal bundle of~$L$ in~$X$, each map~$i^{L}$ is the canonical inclusion $L 
\hookrightarrow X$ and the quantity~$p^{\T}(\mscr{E}_{|_L})$ is $p\Bigl( 
\chernT{0}{\mscr{E}_{|_L}}, \dotsc, \chernT{r}{\mscr{E}_{|_L}} \Bigr)$. Here 
$\chernT{i}{\mscr{E}_{|_L}}$ is the so-called \emph{$i$-th $\T$-equivariant 
Chern class of~$\mscr{E}_{|_L}$}, and Equation~\eqref{equation:bott_formula} 
should be read as an equality in the \emph{$\T$-equivariant Chow ring of~$X$}. 
We refer to the lecture notes~\cite{MeirelesAraujo2001}, and to the references 
therein, for the definitions and the properties of these object.  

\begin{proposition}
\label{proposition:degrees}
 With the notation as in Proposition~\ref{proposition:pullback_schubert}, we 
have
\begin{align*}
 \deg \Bigl( \chern{1}{\mscr{Q}}^3 + \chern{3}{\mscr{Q}} - 
2\chern{1}{\mscr{Q}}\chern{2}{\mscr{Q}} \Bigr) &= 6 \tth \binom{a+3}{3} 
\binom{b+3}{3}, \\
 \deg \Bigl( \chern{3}{\mscr{Q}} \Bigr) &= \frac{1}{6} \tth ab (a^2 - 1)(b^2 - 
1).
\end{align*}
\end{proposition}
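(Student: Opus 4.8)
The plan is to apply the Bott residue formula~\eqref{equation:bott_formula} to the bundle~$\mscr{Q}$ on~$\tilde{\p^3}$, using the torus arising from the right action of~$\pgl(2,\C)$. Let $\T \subseteq \pgl(2,\C)$ be the one-dimensional torus of diagonal matrices $\operatorname{diag}(t,t^{-1})$, acting on the right. By the discussion preceding Proposition~\ref{proposition:description_bundle}, the bundle~$\mscr{Q}$ is $\pgl(2,\C)$-equivariant, hence $\T$-equivariant, and the blow down~$\tau$ is $\T$-equivariant as well; moreover $\tilde{\p^3}$ is smooth, being the blow up of~$\p^3$ along disjoint smooth lines. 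Both polynomials in the Chern classes of~$\mscr{Q}$ appearing in the statement are homogeneous of degree~$3 = \dim \tilde{\p^3}$, so their degrees are well-defined integers that Equation~\eqref{equation:bott_formula} expresses as a sum of local contributions over the components of the fixed locus~$(\tilde{\p^3})^{\T}$.

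First I would determine this fixed locus. On~$\p^3$ the right action of~$\T$ scales the coordinates $(\alpha:\beta:\gamma:\delta)$ with weights $(+1,-1,+1,-1)$, so the fixed locus is the pair of disjoint lines $L_0 = \{\beta = \delta = 0\}$ and $L_\infty = \{\alpha = \gamma = 0\}$. These are two members of the ruling of the quadric~$Q$ that the full right action permutes (Remark~\ref{remark:preserved_lines}), so each of the $(b+1)(d-b)$ lines constituting~$B$ meets $L_0$ and~$L_\infty$ in exactly one point each (here Lemma~\ref{lemma:general_bad_points} guarantees that the points of~$\widehat{B}$, and hence these lines, are distinct and disjoint). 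Passing to $\tilde{\p^3} = \bl{B}{\p^3}$, the strict transforms~$\tilde{L}_0$ and~$\tilde{L}_\infty$ are two fixed curves isomorphic to~$\p^1$, while over each point of $B \cap (L_0 \cup L_\infty)$ the exceptional divisor contributes one further isolated fixed point, namely the eigendirection transverse to~$L_0$, respectively~$L_\infty$. Thus $(\tilde{\p^3})^{\T}$ consists of the two fixed lines together with $2(b+1)(d-b)$ isolated points.

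Next, for every component I would assemble the two ingredients of Equation~\eqref{equation:bott_formula}: the equivariant restriction of~$\mscr{Q}$ and the equivariant Euler class of the normal bundle. At the isolated points the tangent weights are read off from the blow up coordinates, while the $\T$-weights of~$\mscr{Q}$ are extracted from the Gaussian-reduced local matrix~$\widehat{\mcal{Q}}$ produced in the proof of Proposition~\ref{proposition:description_bundle}; here Lemma~\ref{lemma:general_orders} ensures that at each point of~$\widehat{B}$ a suitable basis $G_0, \dotsc, G_b$ has the order pattern $(0,1,\dotsc,b-1,b+1)$, so that all $(b+1)(d-b)$ points lying over~$L_0$ contribute identically, and likewise those over~$L_\infty$. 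For the fixed lines I would instead compute the equivariant Chern roots of $\mscr{Q}|_{\tilde{L}_0}$ and $\mscr{Q}|_{\tilde{L}_\infty}$ together with the rank-two equivariant normal bundles, and then push forward over~$\p^1$ the relevant degree-three term, that is $\chern{1}{\mscr{Q}}^3 + \chern{3}{\mscr{Q}} - 2\chern{1}{\mscr{Q}}\chern{2}{\mscr{Q}}$, respectively $\chern{3}{\mscr{Q}}$. Summing all contributions as in Equation~\eqref{equation:bott_formula} and simplifying the resulting rational function in~$t$ --- which must collapse to a constant --- yields the two values, treating the two cases of Equation~\eqref{equation:finiteness_condition} separately, so that $d = a+b+3$ for the first line of the statement and $d = a+b-1$ for the second.

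The main obstacle I anticipate is the equivariant bookkeeping rather than a single hard idea. One must pin down the $\T$-weights of~$\mscr{Q}$ on the exceptional components, where the bundle is given only implicitly through the pullback module and whose local freeness already demanded the careful reduction of Proposition~\ref{proposition:description_bundle}; and one must correctly carry out the two one-dimensional integrals over~$\tilde{L}_0$ and~$\tilde{L}_\infty$. I expect the individual summands to be unwieldy rational expressions in~$t$ in which the cross-ratio data of~$\widehat{B}$ drops out, so that the sum over the $(b+1)(d-b)$ exceptional points over~$L_0$, the symmetric ones over~$L_\infty$, and the two line integrals reassemble into $6\binom{a+3}{3}\binom{b+3}{3}$ and into $\tfrac{1}{6}\tth ab(a^2-1)(b^2-1)$, respectively. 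A useful final check is that the first value agrees with the elementary count $6\binom{d-a}{3}\binom{d-b}{3}$ obtained directly in Section~\ref{finiteness}.
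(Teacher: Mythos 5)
Your proposal follows essentially the same route as the paper's proof: the same diagonal torus (the paper's two-dimensional $(u,v)$ bookkeeping reduces to your $t = u/v$), the same fixed locus consisting of the two strict transforms of the coordinate lines plus $2(b+1)(d-b)$ isolated points on the exceptional divisors, and the same extraction of the torus weights of~$\mscr{Q}$ from the Gaussian-reduced matrices of Proposition~\ref{proposition:description_bundle} together with Lemma~\ref{lemma:general_orders}. The one substantive step you leave implicit is the computation of $c(\mscr{N}_{\tilde{L}_i/\tilde{\p^3}})$, whose degree is $2-(b+1)(d-b)$ rather than~$2$ because the strict transforms meet the blow-up center; the paper settles this via Aluffi's blow-up formula (Lemma~\ref{lemma:normal_lines}), after which the remaining work is exactly the symbolic summation you describe.
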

\begin{proof}
As we described before Proposition~\ref{proposition:description_bundle}, the 
natural right action of~$\pgl(2,\C)$ on~$\p^3$ determines an action 
of~$\pgl(2,\C)$ on the blowup~$\tilde{\p^{3}}$ and on the sheaf~$\mscr{Q}$ such 
that $\mscr{Q}$ is $\pgl(2,\C)$-equivariant.
 In particular, we have an action on~$\tilde{\p^3}$ by the torus $\T \cong 
\bigl( \C^{\ast} \bigr)^2$ of $2 \times 2$ invertible diagonal matrices. This 
action determines an action on the vector bundle~$\mscr{Q}$, making it into a 
$\T$-equivariant vector bundle.
 Hence we are in the situation of Bott residue formula. First of all, we 
compute the fixed locus of the action of~$\T$ on~$\tilde{\p^3}$.

\begin{lemma}
\label{lemma:fixed_locus}
 The fixed locus of the action of~$\T$ on~$\tilde{\p^3}$ is constituted of two 
lines and $2(b+1)(d-b)$ points.
\end{lemma}
\begin{proof}
We start with the computation of the fixed locus of the action of~$\T$ 
on~$\p^3$. Here, a direct computation shows that this locus is constituted of 
the union of the two lines $L_1 = \{ \alpha = \gamma = 0\}$ and $L_2 = \{\beta 
= \delta = 0\}$. Then by construction the fixed locus of the action of~$\T$ on 
$\tilde{\p^3}$ is contained in the preimage $\tau^{-1}(L_1 \cup L_2)$, where 
$\tau \colon \tilde{\p^3} \longrightarrow \p^3$ is the canonical map. By 
continuity, the strict transforms of~$L_1$ and~$L_2$, which we 
denote by~$\tilde{L}_1$ and~$\tilde{L}_2$, are fixed by~$\T$. In order to 
determine whether some other points in $\tau^{-1}(L_i)$ for $i \in \{1,2\}$ are 
fixed by~$\T$, we can argue as in 
Proposition~\ref{proposition:description_bundle} and do the computations 
assuming that $\tilde{\p^3}$ is the blowup of~$\p^3$ along the line $\{ \gamma 
= \delta = 0\}$. With the same choice of coordinates as in 
Proposition~\ref{proposition:description_bundle}, the action of an element 
$\left( \begin{smallmatrix} u & 0 \\ 0 & v \end{smallmatrix} \right)$ on a 
point $(\alpha: \beta: \bar{\gamma}: \bar{\delta}: w)$ is given by:
\[
 \begin{pmatrix}
  \alpha \\
  \beta \\
  \bar{\gamma} \\
  \bar{\delta} \\
  w
 \end{pmatrix}
 \quad \mapsto \quad
 \begin{pmatrix}
  \alpha u \\
  \beta v \\
  \overline{\gamma} u \\
  \overline{\delta} v \\
  w
 \end{pmatrix} \,.
\]
Hence a direct computation shows that the fixed components are:
\begin{align*}
 \alpha = \bar{\gamma} = 0, & & \beta = \bar{\delta} = 0, \\
 w = \alpha = \bar{\delta} = 0, & & w = \beta = \bar{\gamma} = 0.
\end{align*}
Notice that the first two are the strict transforms of~$L_1$ 
and $L_2$, while the second two are two isolated points on the exceptional 
divisor.
\end{proof}

Once the fixed locus of the torus action is computed, we know from the general 
theory (see \cite[Section 3.5, Theorem 32]{Luke1998}) that the restriction of a 
$\T$-equivariant vector bundle to the fixed locus of the $\T$-action on the 
base splits as a direct sum of eigenbundles. It is crucial to compute this 
decomposition in order to determine the $\T$-equivariant Chern classes of the 
vector bundle. In fact, suppose that $\mscr{E}$ is a $\T$-equivariant vector 
bundle on a smooth $\T$-variety $X$, and the action of~$\T$ on~$X$ is trivial, 
then we have a decomposition $\mscr{E} = \bigoplus \mscr{E}_{\chi}$ into 
eigenbundles, where $\chi$ varies over the characters of~$\T$. In this 
situation, we can express the $\T$-equivariant Chern classes of each 
eigenbundle $\mscr{E}_{\chi}$ as (see \cite[Lemma~3]{Edidin1998})
\begin{equation}
\label{equation:compute_equivariant}
 \chernT{i}{\mscr{E}_{\chi}} = 
 \sum_{j \leq i} \binom{r - j}{i - j} \chern{j}{\mscr{E}_{\chi}} \chi^{i - j},
\end{equation}
where $r$ is the rank of~$\mscr{E}_{\chi}$.
At this point Whitney sum formula, which holds also in the equivariant setting, 
provides the equivariant Chern classes of~$\mscr{E}$.

We therefore proceed by computing the decomposition into eigensubbundles of the 
restriction of the vector bundle~$\mscr{Q}$ to the various components of the 
fixed locus of the action of~$\T$ on $\tilde{\p^3}$.

\begin{lemma}
\label{lemma:eigenbundles_lines}
 The restriction of~$\mscr{Q}$ to~$\tilde{L}_i$ for $i \in \{1,2\}$ splits into 
$b+1$ eigenbundles of rank~$1$, each isomorphic to $\mscr{O}_{\p^1}$. The 
corresponding characters are $u^d, \dotsc, u^{d-b} v^b$ in the case 
of~$\tilde{L}_2$ and $u^b v^{d-b}, \dotsc, v^d$ in the case of~$\tilde{L}_1$.
\end{lemma}
\begin{proof}
 If $m_0, \dotsc, m_d$ are the generators of the module~$M$, whose 
sheafification is the coherent sheaf~$\mscr{M}$ on~$\p^3$, then an element 
$\left( \begin{smallmatrix} u & 0 \\ 0 & v \end{smallmatrix} \right)$ sends 
each~$m_j$ to~$(u^{d-j} v^{j}) m_i$. This follows from the proof of 
Lemma~\ref{lemma:right_action}. Let $\tilde{m}_0, \dotsc, \tilde{m}_d$ be the 
generators of~$Q$, the module whose sheafification is~$\mscr{Q}$, corresponding 
to $m_0, \dotsc, m_d$. Then $\left( \begin{smallmatrix} u & 0 \\ 0 & v 
\end{smallmatrix} \right)$ sends~$\tilde{m}_j$ to~$(u^{d-j} v^{j}) \tilde{m}_j$. 
The restriction~$\mscr{Q}_{|_{\tilde{L}_1}}$ of~$\mscr{Q}$ to~$\tilde{L}_i$ is 
generated by the restrictions of the elements~$\tilde{m}_0, \dotsc, 
\tilde{m}_d$, and by what we have just proved these restrictions generate 
eigensubbundles of mutually different characters; each of the eigensubbundles is 
isomorphic to $\mscr{O}_{\p^1}$ because it is generated by a single section of 
degree~$0$. To conclude the proof, we only have to compute which of the 
restrictions of the elements $\tilde{m}_0, \dotsc, \tilde{m}_d$ generate 
$\mscr{Q}_{|_{\tilde{L}_i}}$, and this can be checked by looking at the stalk of 
the vector bundle at an arbitrary point of each line. 

If we pick a point $P \in \tilde{L}_2$ that does not lie on the exceptional 
divisor, then we can do our computations in~$\p^3$. So, using the left action 
of~$\pgl(2,\C)$ on~$\p^3$, we can suppose that $P = \left( 
\begin{smallmatrix} 1 & 0 \\ 0 & 0 \end{smallmatrix} \right)$, namely $\beta = 
\gamma = \delta = 0$. Hence we are in the situation of the proof of 
Proposition~\ref{proposition:bad_lines}, and here we see that the restriction of 
the first $b+1$ elements $\tilde{m}_0, \dotsc, \tilde{m}_b$ generate 
$\mscr{Q}_{|_{\tilde{L}_2}}$. Similarly, when $P \in \tilde{L}_1$ does not lie 
on the exceptional divisor, an analogous version of the local analysis 
performed in Proposition~\ref{proposition:bad_lines} shows that 
$\mscr{Q}_{|_{\tilde{L}_1}}$ is generated by $\tilde{m}_{d-b}, \dotsc, 
\tilde{m}_d$. In this case one can perform Gaussian elimination from 
the right to the left because the situation is ``mirrored'' with respect to the 
previous one.
\end{proof}

\begin{lemma}
\label{lemma:eigenbundles_points}
 The restriction of~$\mscr{Q}$ to each of the $2(b+1)(d-b)$ fixed points splits 
into $b+1$ eigenbundles of rank~$1$, each hence isomorphic to~$\C$. The 
corresponding characters are $u^d, \dotsc, u^{d-(b-1)} v^{b-1}, 
u^{d-(b+1)}v^{b+1}$ in the case 
of the points above~$L_2$ and $u^{b+1}v^{d-(b+1)}, u^{b-1} v^{d-(b-1)}, \dotsc, 
v^d$ in the case of the points above~$L_1$.
\end{lemma}
\begin{proof}
 As in the proof of Lemma~\ref{lemma:eigenbundles_lines}, we know that each
section~$\tilde{m}_j$ of~$\tilde{\mscr{Q}}$ is sent to~$(u^{d-j} v^{j}) 
\tilde{m}_j$ by the action of the torus~$\T$. Hence, as in 
Lemma~\ref{lemma:eigenbundles_lines}, 
the restriction~$\tilde{\mscr{Q}}_{|_{P}}$, where $P$ is any of the 
$2(b+1)(d-b)$ fixed points, splits into the direct sum of trivial 
eigensubbundles; to determine the relevant characters it is enough to understand 
which of the restrictions to~$P$ of the sections~$\{ \tilde{m}_j \}$ do not 
vanish.

Suppose that $P$ is a point over~$L_2$. Since what we need to perform is a 
local computation, we can put ourselves in the situation of 
Proposition~\ref{lemma:fixed_locus}. Then, we can take $w$, $\beta$ and 
$\bar{\gamma}$ as local coordinates for~$P$, setting $\alpha = \bar{\delta} = 
1$. In this situation the matrix~$\mcal{Q}$ whose columns generate $\mscr{Q}$ 
at~$P$ has entries
\[
 \frac{\partial^{d}G_i(s + \beta t, \bar{\gamma}ws + wt)}{\partial s^{d-j}t^j}.
\]
By employing the substitution $s \mapsto s - \beta t$ and setting $\tilde{D} = 
\bar{\gamma}\beta - 1$, the entries of the matrix become
\[
 \frac{\partial^{d}G_i(s, \bar{\gamma}ws + w\tilde{D}t)}{\partial s^{d-j}t^j}.
\]
Using the Taylor expansion already employed in 
Proposition~\ref{proposition:description_bundle}, we obtain
\[
 \mcal{Q}_{ij} = 
 \frac{(w \tilde{D})^{j}}{j!} \cdot \frac{\partial^j G_i}{\partial t^{j}} (1, 
\bar{\gamma}w).
\]
Now we can proceed with the Gaussian elimination as described in 
Proposition~\ref{proposition:description_bundle} until we reach the situation 
of Equation~\eqref{equation:gaussian_reduced}. From the shape of the matrix we 
infer that the first $b$ columns are linearly independent, and in order to 
prove our claim we just have to show that the $b+2$-th column gives a system of 
free generators for~$\mscr{Q}$ at~$P$. The last row of the matrix~$\mcal{Q}$ 
has the following shape:
\[
 \bigl( \, \underbrace{0 \quad \dotsc \quad 0}_{b \text{ zeros}} \quad 
\overline{\mcal{Q}}_{bb} \quad w \tilde{D} \cdot \overline{\mcal{Q}}_{b,b+1} 
\quad  w^2\tilde{D}^2 \cdot \ast \quad \cdots \, \bigr).
\]
Here, as in Proposition~\ref{proposition:description_bundle}, 
the polynomial~$\overline{\mcal{Q}}_{bb}$ is of the form $H_b(1, 
\bar{\gamma}w)$ for some polynomial $H_b(x,y)$ such that $\ord_{(1:0)}H_b = 1$, 
while $\overline{\mcal{Q}}_{b,b+1}$ is of the form $H_{b+1}(1, \bar{\gamma}w)$, 
where $\ord_{(1:0)}H_{b+1} = 0$. This implies that $\overline{\mcal{Q}}_{bb} = 
\bar{\gamma}w + \ldots$, while $\overline{\mcal{Q}}_{b,b+1}$ is invertible in 
the local ring at~$P$. Since also $\tilde{D}$ is invertible in that ring, it 
follows that the ideal generated by the entries of the last row of~$\mcal{Q}$ 
is $(w)$, and it is generated by $w\tilde{D} \cdot 
\overline{\mcal{Q}}_{b,b+1}$. This implies that the sections that do not vanish 
at~$P$ are $\tilde{m}_0, \dotsc, \tilde{m}_{b-1}$ and $\tilde{m}_{b+1}$, 
showing our claim.

The case of points over $L_1$ is similar, as discussed in 
Lemma~\ref{lemma:eigenbundles_lines}.
\end{proof}

\begin{lemma}
\label{lemma:equivariant_chern_lines}
 For $k \in \{1,2,3\}$ let $\varsigma_k(y_0, \dotsc, y_t)$ be the 
$k$-th elementary symmetric polynomial in the variables $y_0, \dotsc, y_t$.
 The $k$-th equivariant Chern classes of the restriction of~$\mscr{Q}$ 
to~$\tilde{L}_i$ for $i \in \{1,2\}$ are 
\begin{gather*}
 \chernT{k}{\mscr{Q}_{|_{\tilde{L}_1}}} = \varsigma_k \bigl((d-i)v + i \, u, 
\text{ for } i \in \{0, \dotsc, b\} \bigr), \\
 \chernT{k}{\mscr{Q}_{|_{\tilde{L}_2}}} = \varsigma_k \bigl((d-i)u + i \, v, 
\text{ for } i \in \{0, \dotsc, b\} \bigr).
\end{gather*}
\end{lemma}
\begin{proof}
 For each eigensubbundle~$\mscr{E}_{\chi}$ of~$\mscr{Q}_{|_{\tilde{L}_i}}$ of 
character~$\chi$ we have $\chern{0}{\mscr{E}_{\chi}} = 1$ and 
$\chern{i}{\mscr{E}_{\chi}} = 0$ for all $i \geq 1$. In fact, by 
Lemma~\ref{lemma:eigenbundles_lines} every subbundle~$\mscr{E}_{\chi}$ is
isomorphic to~$\mscr{O}_{\p^1}$. Hence, by 
Equation~\eqref{equation:compute_equivariant} we have 
$\chernT{1}{\mscr{E}_{\chi}} = \chi$  and 
$\chernT{i}{\mscr{E}} = 0$ for $i \geq 2$. For example, if we consider 
~$\mscr{Q}_{|_{\tilde{L}_2}}$ and we take $\chi = u^{d-i}v^{i}$, namely 
$\mscr{E}_{\chi}$ is the eigensubbundle generated by the restriction 
to~$\tilde{L}_2$ of the global section~$\tilde{m}_i$, then we have 
$\chernT{1}{\mscr{E}_{\chi}} = (d-i)u + i \, v$ (here the character is reported 
in logarithmic notation). The statement then follows from the 
Whitney sum formula for the Chern class of a direct sum and the description of 
the characters of the eigensubbundles provided 
by Lemma~\ref{lemma:eigenbundles_lines}.
\end{proof}

\begin{lemma}
\label{lemma:equivariant_chern_points}
 For $k \in \{1,2,3\}$ let $\varsigma_k(y_0, \dotsc, y_t)$ be the 
$k$-th elementary symmetric polynomial in the variables $y_0, \dotsc, y_t$.
 The $k$-th equivariant Chern classes of the restriction of~$\mscr{Q}$ 
to each of the $2(b+1)(d-b)$ fixed points are
\[
 \chernT{k}{\mscr{Q}_{P}} = \varsigma_k 
 \Biggl(
  \begin{array}{c}
   (d-i)v + i \, u, \\ 
   i \in \{0, \dotsc, b-1\}
  \end{array} , 
  \bigl(d-(b+1)\bigr)v + (b+1)u 
 \Biggr)
\]
in the case of the points~$P$ over~$L_1$, and 
\[
 \chernT{k}{\mscr{Q}_{P}} = \varsigma_k 
 \Biggl(
  \begin{array}{c}
   (d-i)u + i \, v, \\ 
   i \in \{0, \dotsc, b-1\}
  \end{array} , 
  \bigl(d-(b+1)\bigr)u + (b+1)v 
 \Biggr)
\]
in the case of the points~$P$ over~$L_2$.
\end{lemma}
\begin{proof}
 The proof is analogous to the one of Lemma~\ref{lemma:equivariant_chern_lines}.
\end{proof}

\begin{lemma}
\label{lemma:normal_lines}
 The normal bundle of~$\tilde{L}_1$ (respectively, $\tilde{L}_2$) 
in~$\tilde{\p^3}$ is an eigenbundle of character~$u/v$ (respectively, $v/u$). 
The Chern polynomial of $\mscr{N}_{\tilde{L}_i/\tilde{\p^3}}$ is $1 + 
\mbox{$\bigl(2 - (b+1)(d-b)\bigr)h$}$, where $h$ is the class of a point, for 
both $i = 1$ and $i = 2$.
\end{lemma}
\begin{proof}
We prove the statement for~$\tilde{L}_1$, the argument for~$\tilde{L}_2$ is 
analogous. We start by showing that the whole bundle 
$\mscr{N}_{\tilde{L}_1/\tilde{\p^3}}$ is an eigenbundle of character $u/v$. It 
suffices to show this locally at a point,
since the decomposition into eigensubbundles is canonical. Consider hence a 
point $P = (0:\beta:0:\bar{\delta}:w)$ on~$\tilde{L}_1$ (here we use the 
notation as in Lemma~\ref{lemma:fixed_locus}). We can pick affine coordinates 
$\alpha$, $\bar{\gamma}$, and~$w$ for~$P$, setting $\beta = \bar{\delta} = 1$. 
In these local coordinates, the action of~$\T$ is given by
\begin{equation}
\label{equation:action_affine1}
 \begin{pmatrix}
  \alpha \\
  \bar{\gamma} \\
  w
 \end{pmatrix}
 \quad \mapsto \quad
 \begin{pmatrix}
  \nicefrac{u}{v} \, \alpha \\
  \nicefrac{u}{v} \, \bar{\delta} \\
  w
 \end{pmatrix}.
\end{equation}
In fact, recall that the action of~$\T$ on the coordinates $\alpha, \beta, 
\bar{\gamma}, \bar{\delta}, w$ is
\[
 \begin{pmatrix}
  \alpha \\
  1 \\
  \bar{\gamma} \\
  1 \\
  w
 \end{pmatrix}
 \quad \mapsto \quad
 \begin{pmatrix}
  u \, \alpha \\
  v \\
  u \, \bar{\gamma} \\
  v \\
  w
 \end{pmatrix} = 
 \begin{pmatrix}
  \nicefrac{u}{v} \, \alpha \\
  1 \\
  \nicefrac{u}{v} \, \bar{\gamma} \\
  1 \\
  w
 \end{pmatrix} ,
\]
where we use the special grading of the coordinates for the equality on the 
right. Because of the choice of coordinates, and recalling that $\tilde{L}_1$ 
is defined by $\alpha = \bar{\gamma} = 0$, we can write 
\begin{equation}
\label{equation:normal_bundle}
 \left( \mscr{N}_{\tilde{L}_1/\tilde{\p^3}} \right)_{\!P} \cong 
\frac{\left\langle \partial_{\alpha}, \partial_{\bar{\gamma}}, 
\partial_{w} \right\rangle}{\left\langle \partial_{w} \right\rangle}.
\end{equation}
Since the action on the tangent space at~$P$ of~$\tilde{\p^3}$ is given by the 
Jacobian of Equation~\eqref{equation:action_affine1}, and because of the 
description of the normal bundle in Equation~\eqref{equation:normal_bundle} we 
see that the action on the normal bundle is given by the first principal $2 
\times 2$ minor of that Jacobian, which is a diagonal matrix with~$u/v$ as 
diagonal entries. Hence the whole~$\mscr{N}_{\tilde{L}_1/\tilde{\p^3}}$ is an 
eigenbundle of character~$u/v$.

Let us now compute the Chern polynomial of the normal bundle of~$\tilde{L}_1$; 
the same proof works for~$\tilde{L}_2$. We write~$L$ for~$L_1$. 
Since both~$L$ and~$B$ (the blowup center) are regularly embedded in~$\p^3$, 
and the intersection $L \cap B$ is regularly embedded in both~$L$ and~$B$, 
we can use the result of Aluffi \cite[Section 4.3]{Aluffi2010} to compute the 
Chern polynomial~$c(\mscr{N}_{\tilde{L} / \tilde{\p^3}})$. In fact, both $L$ 
and $B$ are 
contained in the smooth quadric $Q = \{ \alpha \delta - \beta \gamma = 0\}$, 
and inside~$Q$ they intersect properly, since their intersection is 
equidimensional of codimension~$2$ in~$Q$. If $\tilde{Q}$ is the strict 
transform of~$Q$ in~$\tilde{\p^3}$, and $\tau \colon \tilde{\p^3} 
\longrightarrow \p^3$ denotes the blow down map, and $i_{\tilde{L}} \colon 
\tilde{L} \hookrightarrow \tilde{Q}$ and $i_{\tilde{Q}} \colon \tilde{Q} 
\hookrightarrow 
\tilde{\p^3}$ denote the closed immersions, then
\[
 c(\mscr{N}_{\tilde{L} / \tilde{\p^3}}) =
 \tau^{\ast}_{|_{\tilde{L}}} c(\mscr{N}_{L / Q}) \cdot i_{\tilde{L}}^{\ast} 
 c \bigl(
  \tau^{\ast}_{|_{\tilde{Q}}} \mscr{N}_{Q / \p^3} \otimes 
  i_{\tilde{Q}}^{\ast} \mscr{O}_{\tilde{\p^3}}(-E) 
 \bigr).
\]
The formula before can be written as
\[
 c(\mscr{N}_{\tilde{L} / \tilde{\p^3}}) =
 c(\mscr{N}_{L / Q}) \cdot 
 c \bigl(\mscr{N}_{Q / \p^3} \otimes \mscr{O}_{\tilde{\p^3}}(-E) \bigr)
\]
if we omit the pullbacks. Since by \cite[Example V.1.4.1]{Hartshorne1977} the 
degree of $\mscr{N}_{L / Q}$ equals the self-intersection of~$L$ inside~$Q$, we 
have $\deg (\mscr{N}_{L / Q}) = 0$ and so $c(\mscr{N}_{L / Q})$ equals~$1$. By 
\cite[Adjunction Formula 
I]{Griffiths1978} we have $\mscr{N}_{Q / \p^3} \cong \mscr{O}_{\p^3}(Q)_{|Q}$ 
and so, as a sheaf on~$\p^1 \times \p^1$, the latter is $\mscr{O}_{\p^1 \times 
\p^1}(2,2)$. Since $\tau_{|_{\tilde{Q}}}$ is an isomorphism (in fact the blowup 
center~$B$ is a divisor in~$Q$), we can compute $\tau^{\ast}_{|_{\tilde{Q}}} 
\mscr{N}_{Q / \p^3} \otimes i_{\tilde{Q}}^{\ast} \mscr{O}_{\tilde{\p^3}}(-E)$ 
as a sheaf on~$\p^1 \times \p^1$; this amounts to shift $\mscr{N}_{Q / 
\p^3}$ by $\mscr{O}_{Q}(-B)$, obtaining $\mscr{O}_{\p^1 \times \p^1}(2,2-K)$, 
where $K = (b+1)(d-b)$. Notice that here we used that the class in~$\p^1 \times 
\p^1$ of the lines in~$B$ is~$(0,1)$, thus the class of~$L$ is $(1,0)$. 
Eventually, restricting this bundle to~$\tilde{L}$ gives 
$\mscr{O}_{\tilde{L}}(2-K)$, because the intersection product in~$\p^1 \times 
\p^1$ of the classes $(1,0)$ and $(2,2-K)$ equals $2-K$. Hence we obtain
\[
 c(\mscr{N}_{\tilde{L} / \tilde{\p^3}}) =
 1 + \bigl(2 - (b+1)(d-b)\bigr)h,
\]
where $h$ is the class of a point.
\end{proof}

\begin{lemma}
\label{lemma:normal_points}
 The normal bundle --- namely, the tangent space --- of each of the 
$2(b+1)(d-b)$ fixed points splits as the sum of two eigenbundles as follows:
\begin{itemize}
 \item 
 one of rank~$1$ of character $v/u$ and another of rank~$2$ of character~$u/v$ 
for the points over $L_{1}$;
 \item
 one of rank~$1$ of character $u/v$ and another of rank~$2$ of character~$v/u$ 
for the points over $L_{2}$.
\end{itemize}
\end{lemma}
\begin{proof}
 Let $P$ be a point over $L_1$. We compute the action of the torus locally 
around~$P$. Thus we can suppose that we are in the setting of 
Lemma~\ref{lemma:fixed_locus}, namely $P$ has equations $w = \alpha = 
\bar{\delta} = 0$. Similarly as what we did in Lemma~\ref{lemma:normal_lines}, 
we can pick affine coordinates $w, \alpha, \bar{\delta}$ for $P$, setting 
$\beta = \bar{\gamma} = 1$. In these local coordinates, the action of~$\T$ is 
given by
\begin{equation}
\label{equation:action_affine2}
 \begin{pmatrix}
  \alpha \\
  \bar{\delta} \\
  w
 \end{pmatrix}
 \quad \mapsto \quad
 \begin{pmatrix}
  \nicefrac{u}{v} \, \alpha \\
  \nicefrac{v}{u} \, \bar{\delta} \\
  \nicefrac{u}{v} \, w
 \end{pmatrix}
\end{equation}
because the action of~$\T$ on the coordinates $\alpha, \beta, 
\bar{\gamma}, \bar{\delta}, w$ is
\[
 \begin{pmatrix}
  \alpha \\
  1 \\
  1 \\
  \bar{\delta} \\
  w
 \end{pmatrix}
 \quad \mapsto \quad
 \begin{pmatrix}
  u \, \alpha \\
  v \\
  u \\
  v \, \bar{\delta} \\
  w
 \end{pmatrix} = 
 \begin{pmatrix}
  \alpha \\
  \nicefrac{v}{u} \\
  1 \\
  \nicefrac{v}{u} \, \bar{\delta} \\
  w
 \end{pmatrix} =
 \begin{pmatrix}
  \nicefrac{u}{v} \, \alpha \\
  1 \\
  1 \\
  \nicefrac{v}{u} \, \bar{\delta} \\
  \nicefrac{u}{v} \, w
 \end{pmatrix},
\]
where we use the special grading of the coordinates for the equalities on the 
right.
Since the action on the tangent bundle is given by the Jacobian of the 
action in Equation~\eqref{equation:action_affine2}, we obtain two 
eigensubbundles of the desired rank and character.

The argument when $P$ is a point over $L_2$ is identical.
\end{proof}

\begin{lemma}
\label{lemma:equivariant_normal_lines}
 The second equivariant Chern class of the normal bundle of~$\tilde{L}_i$ is
\begin{gather*}
 \chernT{2}{\mscr{N}_{\tilde{L}_1/\tilde{\p^3}}} = 
  (u-v)^2+(u-v)\bigl(2-(b+1)(d-b)\bigr)h, \\
 \chernT{2}{\mscr{N}_{\tilde{L}_2/\tilde{\p^3}}} = 
  (v-u)^2+(v-u)\bigl(2-(b+1)(d-b)\bigr)h.
\end{gather*}
\end{lemma}
\begin{proof}
 We argue as in Lemma~\ref{lemma:equivariant_chern_lines} using the results of 
Lemma~\ref{lemma:normal_lines}.
\end{proof}

\begin{lemma}
\label{lemma:equivariant_normal_points}
 The third equivariant Chern class of the normal bundle of each 
of the $2(b+1)(d-b)$ fixed points is
\begin{align*}
 \chernT{3}{\mscr{N}_{P/\tilde{\p^3}}} = 
  (v-u)^3 \qquad & \text{for points\ } P \text{ above } L_1 \text{ and} \\
 \chernT{3}{\mscr{N}_{P/\tilde{\p^3}}} = 
  (u-v)^3 \qquad & \text{for points\ } P \text{ above } L_2.
\end{align*}
\end{lemma}
\begin{proof}
 Suppose that $P$ is a point over $L_2$, then by Lemma~\ref{lemma:normal_points} 
we have that $\mscr{N}_{P/\tilde{\p^3}} \cong E_1 \oplus E_2$ where $E_i$ are 
vector spaces of dimension $i$ and of characters~$u/v$ and~$v/u$, 
respectively. Using \cite[Lemma~3]{Edidin1998} we find that 
$\chernT{1}{E_1} = u - v$, while $\chernT{i}{E_1} = 0$ for all $i 
\geq 2$, and $\chernT{2}{E_2} = (v-u)^2$, while $\chernT{i}{E_1} = 0$ for all 
$i \geq 3$. Hence $\chernT{3}{\mscr{N}_{P/\tilde{\p^3}}} = \chernT{1}{E_1} 
\chernT{2}{E_2} = (u-v)^3$ by the Whitney formula. 

The proof for points over $L_1$ is analogous.
\end{proof}
We can now finally use Bott residue formula to prove 
Proposition~\ref{proposition:degrees}. We only present the 
computation of $\deg \bigl( \chern{3}{\mscr{Q}} \bigr)$: the other 
case is analogous, but the computations are more tedious. 
Equation~\eqref{equation:bott_formula} gives
\begin{multline*}
 \deg \bigl(\chern{3}{\mscr{Q}} \bigr) = 
  \deg \, i^{\tilde{L}_1}_{\ast} \left( 
\frac{\chernT{3}{\mscr{Q}_{|_{\tilde{L}_1}}}}{\chernT{2}{\mscr{N}_{\tilde{L 
}_1/\tilde{\p^3}} } } \right) + \deg \, i^{\tilde{L}_2}_{\ast} \left(
\frac{\chernT{3}{\mscr{Q}_{|_{\tilde{L}_2}}}}{\chernT{2}{\mscr{N}_{\tilde{L}
_2/\tilde{\p^3}}}} \right) \\
+ (b+1)(d-b) \underbrace{\deg \, i^{P_1}_{\ast} \left(
\frac{\chernT{3}{\mscr{Q}_{|_{P_1}}}}{\chernT{3}{\mscr{N}_{P_1/\tilde{\p^3}}}}
\right)}_{\text{a point over } L_1}
+ (b+1)(d-b) \underbrace{\deg \, i^{P_2}_{\ast} \left(
\frac{\chernT{3}{\mscr{Q}_{|_{P_2}}}}{\chernT{3}{\mscr{N}_{P_2/\tilde{\p^3}}}}
\right)}_{\text{a point over } L_2}.
\end{multline*}
The previous results allow us to compute each of the four summands:
\begin{align*}
\deg \, & i^{\tilde{L}_1}_{\ast} \left( 
\frac{\chernT{3}{\mscr{Q}_{|_{\tilde{L}_1}}}}{\chernT{2}{\mscr{N}_{\tilde{L 
}_1/\tilde{\p^3}} } } \right) = \deg 
\frac{\varsigma_3 \bigl((d-i)v + i \, u, \, 
i \in \{0, \dotsc, b\} \bigr)}{(u-v)^2+(u-v)\bigl(2-(b+1)(d-b)\bigr)h} \\
&= \frac{\varsigma_3 \bigl((d-i)v + i \, u, \, 
i \in \{0, \dotsc, b\} \bigr)}{u-v} \deg \frac{(u-v) - 
\bigl(2-(b+1)(d-b)\bigr)h}{(u-v)^2} \\
&= \frac{\varsigma_3 \bigl((d-i)v + i \, u, \, 
i \in \{0, \dotsc, b\} \bigr) \bigl(-2 + (b+1)(d-b)\bigr)}{(u-v)^3} ,
\end{align*}
(where in the second equality we multiplied both numerator and denominator by 
$(u-v) - \bigl(2-(b+1)(d-b)\bigr)h$, and we used the 
fact that $h^2 = 0$ in the Chow ring of~$\tilde{L}_1$)
\begin{align*}
\deg \, i^{\tilde{L}_2}_{\ast} \left( 
\frac{\chernT{3}{\mscr{Q}_{|_{\tilde{L}_2}}}}{\chernT{2}{\mscr{N}_{\tilde{L 
}_2/\tilde{\p^3}} } } \right) &=  \frac{\varsigma_3 \bigl((d-i)u + i \, v, \, 
i \in \{0, \dotsc, b\} \bigr) \bigl(-2 + (b+1)(d-b)\bigr)}{(v-u)^3} , \\
\deg \, i^{P_1}_{\ast} \left(
\frac{\chernT{3}{\mscr{Q}_{|_{P_1}}}}{\chernT{3}{\mscr{N}_{P_1/\tilde{\p^3}}}}
\right) &= \frac{1}{(v-u)^3} \, \varsigma_3 \Biggl(\!\!\!
\begin{array}{c}
(d-i)v + i \, u, \\ 
i \in \{0, \dotsc, b-1\}
\end{array} , 
\bigl(d-(b+1)\bigr)v + (b+1)u \! \Biggr) , \\
\deg \, i^{P_2}_{\ast} \left(
\frac{\chernT{3}{\mscr{Q}_{|_{P_2}}}}{\chernT{3}{\mscr{N}_{P_2/\tilde{\p^3}}}}
\right) &= \frac{1}{(u-v)^3} \, \varsigma_3 \Biggl(\!\!\!
\begin{array}{c}
(d-i)u + i \, v, \\ 
i \in \{0, \dotsc, b-1\}
\end{array} , 
\bigl(d-(b+1)\bigr)u + (b+1)v \! \Biggr) .
\end{align*}
Hence in this way we have expressed $\deg \bigl(\chern{3}{\mscr{Q}} \bigr)$ as 
a rational function in~$u$ and~$v$. Therefore this rational function is 
constant, and so we can assign arbitrary values to~$u$ and~$v$ (as long as 
fractions have non-zero denominator), and we will obtain the same number. We 
take $u = 1$ and $v = 0$, and we write $K$ for the quantity $(b+1)(d-b)$. In 
this way, we get
\begin{align*}
  \deg \bigl(\chern{3}{\mscr{Q}} \bigr) = 
  &\phantom{+\ }(K-2) \,\varsigma_3
   \bigl(i, \text{ for } i \in \{0, \dotsc, b\}\bigr) \\
  &-(K-2) \,\varsigma_3\bigl(d-i, \text{ for } i \in \{0, \dotsc, b\}\bigr) \\ 
  &-K \,\varsigma_3 \bigl(i, \text{ for } i \in \{0, \dotsc, b-1\},(b+1)\bigr)\\
  &+ K \, \varsigma_3 
   \bigl(d-i, \text{ for } i \in \{0, \dotsc, b-1\}, \bigl(d-(b+1)\bigr) \bigr).
\end{align*}
The previous formula can be simplified further, noticing that 
\begin{multline*}
 \varsigma_3 \bigl(i, \text{ for } i \in \{0, \dotsc, b-1\},(b+1)\bigr) = \\
 \varsigma_3 \bigl(i, \text{ for } i \in \{0, \dotsc, b\}\bigr) + 
 \varsigma_2 \bigl(i, \text{ for } i \in \{0, \dotsc, b-1\}\bigr).
\end{multline*}
We get
\begin{align*}
 \deg \bigl(\chern{3}{\mscr{Q}} \bigr) = 
 &-2 \,\varsigma_3\bigl(i, \text{ for } i \in \{0, \dotsc, b\}\bigr) 
\\
 &+2 \,\varsigma_3\bigl(d-i, \text{ for } i \in \{0, \dotsc, b\}\bigr) \\ 
 &-K \,\varsigma_2 \bigl(i, \text{ for } i \in \{0, \dotsc, b-1\}\bigr) \\
 &-K \,\varsigma_2 \bigl(d-i, \text{ for } i \in \{0, \dotsc, b-1\}\bigr).
\end{align*}
If we write
\[
 \varsigma_3\bigl(i, \text{ for } i \in \{0, \dotsc, b\}\bigr) = 
 \sum_{h=0}^{b} \; \sum_{l = h+1}^{b} \; \sum_{m = l+1}^{b} h l m,
\]
and similarly for the other summands, then we obtain the statement using 
standard techniques in summation or a symbolic summation software as, for 
example, Mathematica. We thank Christoph Koutschan for helping us with this 
symbolic summation problem.
\end{proof}

Proposition~\ref{proposition:pullback_schubert} and~\ref{proposition:degrees} imply:

\begin{theorem}
\label{theorem:algebraic}
  Let $V_a$ and $V_b$ be general vector subspaces of~$\C[s,t]_{d}$ of
dimension~$a+1$ and~$b+1$, respectively. Suppose that
\[
 (a + b - c + 1)(d - c) \, = \, 3.
\]
Then, the cardinality of the set
\[ 
 \bigl\{ \sigma\in\pgl(2,\C)\,\colon\, \dim(V_a+V_b^\sigma) \le c+1 \bigr\} 
\]
is 
\[
  \begin{array}{lcl}
    \displaystyle \frac{1}{6} \tth ab (a^2 - 1)(b^2 - 1) & & 
     \text{if } a + b + 1 - c = 3 \text{ and } d - c = 1, \\[2ex]
    \displaystyle 6\tth\binom{a+3}{3} \binom{b+3}{3} & & 
     \text{if } a + b + 1 - c = 1 \text{ and } d - c = 3.
  \end{array}
\]
\end{theorem}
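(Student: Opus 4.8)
The plan is to assemble, into a single chain of equalities, the identifications built up over Section~\ref{formula}, thereby reducing the cardinality in question to the degree of a $0$-cycle on $\tilde{\p^3}$, and then to read that degree off from Proposition~\ref{proposition:degrees}.

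First I would recall that the set whose cardinality we seek is precisely $\Phi_{V_b}^{-1}(Z_{V_a})$, with $Z_{V_a}$ the Schubert variety of Definition~\ref{definition:schubert_cycle}. By Lemma~\ref{lemma:general_preimage}, for general $V_a$ and $V_b$ this preimage lies entirely in $U$, so each of its points corresponds to a genuine element of $\pgl(2,\C)$ and nothing is gained or lost by passing to the closure in $\p^3$. Since by Proposition~\ref{proposition:description_bundle} the blow-down $\tau\colon\tilde{\p^3}\to\p^3$ is an isomorphism over $U'\supseteq U$, the cardinalities of $\Phi_{V_b}^{-1}(Z_{V_a})$ and $\tilde{\Phi}_{V_b}^{-1}(Z_{V_a})$ coincide; here $\tilde{\Phi}_{V_b}$ is now a genuine morphism, so its pullback on cycles is defined.

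Next I would invoke the finiteness and smoothness recorded in Section~\ref{finiteness}: for general $V_a,V_b$ the fiber is a finite, reduced set of smooth points. It follows that the counting number equals the degree of the $0$-cycle $\tilde{\Phi}_{V_b}^{*}([Z_{V_a}])$. Proposition~\ref{proposition:pullback_schubert} then rewrites this class, case by case, as a polynomial in the Chern classes of $\mscr{Q}$: it is $\chern{3}{\mscr{Q}}$ when $a+b+1-c=3$ and $d-c=1$, and $\chern{3}{\mscr{Q}}-2\chern{1}{\mscr{Q}}\chern{2}{\mscr{Q}}+\chern{1}{\mscr{Q}}^3$ when $a+b+1-c=1$ and $d-c=3$. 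Finally, Proposition~\ref{proposition:degrees} evaluates exactly these two expressions, giving $\tfrac{1}{6}ab(a^2-1)(b^2-1)$ in the first case and $6\binom{a+3}{3}\binom{b+3}{3}$ in the second; substituting closes the argument.

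The step I regard as carrying the real content, rather than bookkeeping, is the passage from a set-theoretic count to a degree with every multiplicity equal to one. This rests on the reducedness of $\tilde{\Phi}_{V_b}^{-1}(Z_{V_a})$, which is the by-product of the explicit cross-ratio construction of Section~\ref{finiteness}, combined with Lemma~\ref{lemma:general_preimage} to guarantee that no spurious intersection occurs on the exceptional locus of $\tau$. Everything else is a formal consequence of the functoriality of the Chern-class formalism and of the two cited propositions.
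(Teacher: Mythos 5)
Your proposal is correct and takes essentially the same route as the paper: the paper obtains Theorem~\ref{theorem:algebraic} by exactly this chain --- identifying the set with $\Phi_{V_b}^{-1}(Z_{V_a})$, using Lemma~\ref{lemma:general_preimage} to keep the fiber inside $U$, using that $\tau$ is an isomorphism there, invoking the finiteness and smoothness (reducedness) of the fiber from Section~\ref{finiteness} to equate the count with the degree of $\tilde{\Phi}_{V_b}^{*}([Z_{V_a}])$, and then citing Propositions~\ref{proposition:pullback_schubert} and~\ref{proposition:degrees}. Your case-by-case matching of the Chern-class expressions to the two numerical answers also agrees with the paper's.
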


\begin{remark}
Here is a funny example: let $a=1$, $b=0$, $c=1$ and $d=4$. Then we are 
counting the number of changes of variables such that a given general binary 
quartic $G$ becomes an element of a given general linear pencil 
$\Gamma:=\langle F_0,F_1\rangle$ of binary quartics. Our formula gives the 
answer 24. However, by applying a change of variables to~$G$ we can only 
get 6 projectively different elements of~$\Gamma$. In fact, binary quartics 
have one invariant $I$ which is of degree~6 in the coefficients (see 
\cite[Section~10.2]{Dolgachev2003}). Therefore the value of $I$ coincides with 
$I(G)$ at exactly 6 elements in the pencil, obtained by solving the equation 
$I(\lambda F_0+\mu F_1)=I(G)$ for $(\lambda : \mu)\in\p^1$. The discrepancy 
between the number of changes of variables and the number of elements in 
$\Gamma$ is explained by the fact that a general quartic binary form has 4 
automorphisms.
\end{remark}

We conclude by translating the Theorem~\ref{theorem:algebraic}
into the answer to our initial problem. First of all, notice that if $V_a$ and 
$V_b$ are general and Equation~\eqref{equation:finiteness_condition} holds, then
\[
 \left\{
 \begin{array}{c}
  \sigma \in \pgl(2,\C) \text{ such that} \\
  \dim \bigl( V_a + V_b^{\sigma} \bigr) \, \leq \, c+1
 \end{array}
 \right\} = 
 \left\{
 \begin{array}{c}
  \sigma \in \pgl(2,\C) \text{ such that} \\
  \dim \bigl( V_a + V_b^{\sigma} \bigr) \, = \, c+1
 \end{array}
 \right\} .
\]
In fact, the arguments of Section~\ref{finiteness} show that the set of $\sigma 
\in \pgl(2,\C)$ such that $\dim \bigl( V_a + V_b^{\sigma} \bigr) \, \leq \, c$ is empty.
This holds because, using the notation introduced there, in this case the 
incidence variety~$\mscr{I}$ has codimension $(d-c)(a+b-c+2)$ in~$\Gr{a}{d}$, 
so its dimension is strictly smaller than the dimension of $\Gr{a}{d} \times 
\Gr{b}{d}$ and therefore the map~$\psi$ cannot be dominant.

The two subspaces $V_a$ and $V_b$ define parametrizations $f_a \colon \p^1 
\longrightarrow C_a \subseteq \p^a$ and $f_b \colon \p^1 \longrightarrow C_b 
\subseteq \p^b$. Consider the equivalence relation on triples $(\pi_a, \pi_b, 
C_c)$, where $C_c \subseteq \p^c$ is a rational curve of degree $d$ and $\pi_u 
\colon \p^c \dashrightarrow \p^u$ are linear projections such that $\pi_u (C_c) 
= C_u$ for $u \in \{a,b\}$, given by the action of~$\pgl(c+1,\C)$:
\[
 (\pi_a, \, \pi_b, \, C_c) \sim
 \bigl(\pi_a \circ \alpha, \, \pi_b \circ \alpha, \, \alpha^{-1}(C_c)\bigr) 
 \qquad \text{for every } \alpha \in \pgl(c+1,\C).
\]
We now show that there is a bijection
\[
 \left\{
 \begin{array}{c}
  \sigma \in \pgl(2,\C) \text{ such that} \\
  \dim \bigl( V_a + V_b^{\sigma} \bigr) \, = \, c+1
 \end{array}
 \right\} \longleftrightarrow
 \left\{
 \begin{array}{c}
  \text{equivalence classes under } \sim \\
  \text{of triples } (\pi_a, \pi_b, C_c)
 \end{array}
 \right\} .
\]
Starting from $\sigma\in\pgl(2,\C)$, we define $f_c \colon \p^1 \longrightarrow 
\p^c$ as the map associated to the $(c+1)$-dimensional vector space $V_a + 
V_b^{\sigma}$. Since $V_a$ and $V_b^{\sigma}$ are subspaces, we get projections 
$\pi_a$ and $\pi_b$ sending the image $C_c=f_c(\p^1)$ to $C_a$ and $C_b$, 
respectively. \\ Conversely, starting from a triple $(\pi_a, \pi_b, C_c)$ as 
above, one defines $f_c\colon\p^1\longrightarrow\p^c$ as 
$\bigl({\pi_a}_{|_{C_c}}\bigr)^{-1}\circ f_a$. Then we define $\sigma \colon 
\p^1 \longrightarrow \p^1$ as $f_b^{-1} \circ \pi_b \circ f_c$. Here we use that 
${\pi_a}_{|_{C_c}}$ and ${\pi_a}_{|_{C_c}}$ are birational because they preserve 
the degree of the curve. If we replace the triple $(\pi_a, \pi_b, C_c)$ by an 
equivalent one, then we get the same $\sigma$. One can also check that these two 
constructions are each other's inverse.
Hence it follows:

\begin{theorem}
\label{theorem:number_witnesses}
 Let $C_a \subseteq \p^a$ and $C_b \subseteq \p^b$ be 
two general rational curves of degree~$d$. Let $c$ be a natural 
number and suppose that the following holds:
\[
  (a + b + 1 - c)(d - c) \, = \, 3.
\]
Then there are, up to automorphisms of~$\p^c$, finitely many rational 
non-degenerate curves $C_c \subseteq \p^c$ of degree~$d$ together with 
linear projections 
$\pi_a \colon C_c \longrightarrow C_a$ and $\pi_b \colon 
C_c \longrightarrow C_b$.
\begin{itemize}
  \item[(1)]
    Suppose that $a + b + 1 - c = 1$ and $d - c = 3$. Then, the number of these 
curves and projections is
    \[
      \frac{1}{6} (a+3)(a+2)(a+1)(b+3)(b+2)(b+1).
    \]
  \item[(2)]
    Suppose that $a + b + 1 - c = 3$ and $d - c = 1$. Then, the number of these 
curves and projections is
    \[ 
      \frac{1}{6} \tth ab (a^2 - 1)(b^2 - 1).
    \]
\end{itemize}
\end{theorem}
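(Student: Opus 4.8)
The plan is to deduce this statement directly from Theorem~\ref{theorem:algebraic} by setting up a bijection between its solution set of automorphisms and the set of geometric data $(\pi_a, \pi_b, C_c)$ counted here, taken up to projective equivalence in~$\p^c$. First I would fix parametrizations $f_a \colon \p^1 \to C_a$ and $f_b \colon \p^1 \to C_b$, which, following Section~\ref{linearization}, produce general vector subspaces $V_a, V_b \subseteq \C[s,t]_d$ of dimensions $a+1$ and $b+1$. The translation carried out there shows that a triple $(\pi_a, \pi_b, C_c)$ solving the original problem is the same datum as an automorphism $\sigma \in \pgl(2,\C)$ with $\dim(V_a + V_b^{\sigma}) \le c+1$.

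The next step is to replace the inequality $\le c+1$ by the equality $= c+1$. This is legitimate because, by the dimension count of Section~\ref{finiteness}, the locus of $\sigma$ with $\dim(V_a + V_b^{\sigma}) \le c$ is empty for general $V_a, V_b$: the corresponding incidence variety then has codimension $(d-c)(a+b-c+2)$ in $\Gr{a}{d}$, which under Equation~\eqref{equation:finiteness_condition} forces its dimension strictly below that of $\Gr{a}{d} \times \Gr{b}{d}$, so the analogue of the projection~$\psi$ cannot be dominant.

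The heart of the argument is the explicit bijection. In the forward direction, given $\sigma$ with $\dim(V_a + V_b^{\sigma}) = c+1$, I let $f_c \colon \p^1 \to \p^c$ be the morphism attached to the $(c+1)$-dimensional space $V_a + V_b^{\sigma}$; since $V_a$ and $V_b^{\sigma}$ are subspaces, the two coordinate projections restrict to linear maps $\pi_a, \pi_b$ carrying $C_c = f_c(\p^1)$ onto $C_a$ and $C_b$. In the reverse direction, from a triple $(\pi_a, \pi_b, C_c)$ I recover $f_c = (\pi_a|_{C_c})^{-1} \circ f_a$ --- using that $\pi_a|_{C_c}$ is birational because it preserves the degree of the curve --- and then set $\sigma = f_b^{-1} \circ \pi_b \circ f_c$. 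One checks that these assignments are mutually inverse and that replacing a triple by a $\pgl(c+1,\C)$-equivalent one leaves $\sigma$ unchanged, so the bijection is precisely between automorphisms and equivalence classes of triples.

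The main obstacle is the careful bookkeeping in this last step: confirming that the two constructions invert one another, that $C_c$ is non-degenerate (so that $f_c$ genuinely maps into $\p^c$ and not a smaller linear subspace --- this is exactly where $\dim = c+1$ rather than $<c+1$ is used), and that the $\pgl(c+1,\C)$-action on triples corresponds exactly to the ambiguity in choosing coordinates for the subspace $V_a + V_b^{\sigma}$. Once the bijection is established, the two numerical formulas follow at once from Theorem~\ref{theorem:algebraic}, after rewriting $6\binom{a+3}{3}\binom{b+3}{3}$ as $\frac{1}{6}(a+3)(a+2)(a+1)(b+3)(b+2)(b+1)$ in case~(1), while case~(2) is already in the stated form $\frac{1}{6} \tth ab(a^2-1)(b^2-1)$.
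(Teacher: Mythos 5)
Your proposal is correct and follows essentially the same route as the paper's own proof: reduce the inequality $\dim(V_a+V_b^{\sigma})\le c+1$ to the equality $\dim(V_a+V_b^{\sigma})=c+1$ via the codimension count from Section~\ref{finiteness}, construct the same bijection between such automorphisms~$\sigma$ and $\pgl(c+1,\C)$-equivalence classes of triples $(\pi_a,\pi_b,C_c)$ (with the same forward and inverse maps, including $f_c=\bigl({\pi_a}_{|_{C_c}}\bigr)^{-1}\circ f_a$ and $\sigma=f_b^{-1}\circ\pi_b\circ f_c$), and then read off the counts from Theorem~\ref{theorem:algebraic}. No substantive differences to report.
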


\section*{Acknowledgments}

We thank Hoon Hong for many useful discussions that inspired this work, and 
Niels Lubbes for helping us with a particular case of our problem and for 
useful comments. We thank Christoph Koutschan for providing us the proof of 
a useful lemma. Matteo Gallet would like to thank Dario Portelli for teaching 
him, among many other things, important notions concerning intersection theory.

\providecommand{\bysame}{\leavevmode\hbox to3em{\hrulefill}\thinspace}
\providecommand{\MR}{\relax\ifhmode\unskip\space\fi MR }
\providecommand{\MRhref}[2]{%
  \href{http://www.ams.org/mathscinet-getitem?mr=#1}{#2}
}
\providecommand{\href}[2]{#2}

\end{document}